\newtheorem{observation}{Observation}
\newtheorem{conj}{Conjecture}
\newcommand{\alow}{\ensuremath{a_\ell}}
\newcommand{\ahigh}{\ensuremath{a_h}}
\newcommand{\blow}{\ensuremath{b_\ell}}
\newcommand{\bhigh}{\ensuremath{b_h}}
\renewcommand{\L}{\ensuremath{L}}
\newcommand{\T}{\ensuremath{\mathcal{T}}}
\renewcommand{\S}{\ensuremath{\mathcal{S}}}
\newcommand{\keywords}[1]{\par\addvspace\baselineskip\noindent\keywordname\enspace\ignorespaces#1}
\begin{document}
\mainmatter  
\title{Improved Lower Bounds on the\\Compatibility of Multi-State Characters\thanks{This work was supported in part by the National Science Foundation under grants CCF-1017189 and DEB-0829674.}}
\titlerunning{Improved Lower Bounds on the Compatibility of Multi-State Characters}
\author{Brad Shutters \and Sudheer Vakati \and David Fern\'{a}ndez-Baca}
\authorrunning{B. Shutters \and S. Vakati \and D. Fern\'{a}ndez-Baca}
\institute{Department of Computer Science, Iowa State University, Ames, IA\ \ 50011, USA \\\email{\{shutters,svakati,fernande\}@iastate.edu}}
\maketitle
\begin{abstract}
We study a long standing conjecture on the necessary and sufficient conditions for the compatibility of multi-state characters: There exists a function $f(r)$ such that, for any set $C$ of $r$-state characters, $C$ is compatible if and only if every subset of $f(r)$ characters of $C$ is compatible. We show that for every $r \ge 2$, there exists an incompatible set $C$ of $\lfloor\frac{r}{2}\rfloor\cdot\lceil\frac{r}{2}\rceil + 1$ $r$-state characters such that every proper subset of $C$ is compatible. Thus, $f(r) \ge \lfloor\frac{r}{2}\rfloor\cdot\lceil\frac{r}{2}\rceil + 1$ for every $r \ge 2$. This improves the previous lower bound of $f(r) \ge r$ given by Meacham (1983), and generalizes the construction showing that $f(4) \ge 5$ given by Habib and To (2011). We prove our result via a result on quartet compatibility that may be of independent interest: For every integer $n \ge 4$, there exists an incompatible set $Q$ of $\lfloor\frac{n-2}{2}\rfloor\cdot\lceil\frac{n-2}{2}\rceil + 1$ quartets over $n$ labels such that every proper subset of $Q$ is compatible. We contrast this with a result on the compatibility of triplets: For every $n \ge 3$, if $R$ is an incompatible set of more than $n-1$ triplets over $n$ labels, then some proper subset of $R$ is incompatible. We show this upper bound is tight by exhibiting, for every $n \ge 3$, a set of $n-1$ triplets over $n$ taxa such that $R$ is incompatible, but every proper subset of $R$ is compatible.
\keywords{phylogenetics, quartet compatibility, character compatibility, perfect phylogeny}
\end{abstract}
\section{Introduction}

The multi-state character compatibility (or perfect phylogeny) problem is a basic question in computational phylogenetics \cite{Semple2003a}. Given a set  $C$ of characters, we are asked whether there exists a phylogenetic tree that displays every character in $C$; if so, $C$ is said to be compatible, and incompatible otherwise. The problem is known to be NP-complete \cite{Bodlaender1992a,Steel1992a}, but certain special cases are known to be polynomially-solvable \cite{Agarwala1994a,Dress1992a,Gusfield1991a,Kannan1994a,Kannan1997a,Lam2011a,Shutters2012a}. See \cite{FernandezBaca2001a} for more on the perfect phylogeny problem.
 
In this paper we study a long standing conjecture on the necessary and sufficient conditions for the compatibility of multi-state characters.

\begin{conj}\label{conjFR}
There exists a function $f(r)$ such that, for any set $C$ of $r$-state characters, $C$ is compatible if and only if every subset of $f(r)$ characters of $C$ is compatible.
\end{conj}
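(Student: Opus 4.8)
The plan is to prove the nontrivial ``if'' direction, since the ``only if'' direction is immediate: any subset of a compatible set is compatible, so in particular every $f(r)$-element subset is. Thus it suffices to exhibit a bound $f(r)$ with the property that every incompatible set of $r$-state characters contains an incompatible subset of at most $f(r)$ characters. Equivalently --- and this is the form I would work with --- I would show that every \emph{minimally incompatible} set of $r$-state characters (one that is incompatible but all of whose proper subsets are compatible) has cardinality bounded by a function of $r$ alone, independent of the number of characters and of the number of taxa.

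The first concrete step is to pass from compatibility to a purely graph-theoretic condition via the partition intersection graph. For a set $C$ of characters on a taxon set $X$, let $G(C)$ be the graph whose vertices are the character-state pairs and in which two pairs belonging to distinct characters are adjacent exactly when some taxon realizes both states. By Buneman's characterization, $C$ is compatible if and only if $G(C)$ admits a \emph{restricted} (or legal) triangulation, i.e.\ a chordal completion obtained by adding only fill edges that join states of distinct characters. Under this translation a minimally incompatible set becomes a graph $G(C)$ that admits no legal triangulation, yet for which deleting the (at most $r$) vertices contributed by any single character restores the existence of one.

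From here the strategy is an extremal/obstruction argument. Each character contributes at most $r$ vertices to $G(C)$, so a bound on the number of characters is equivalent to a bound on $|V(G(C))|/r$ for minimal obstructions. The target lemma I would aim to establish is that the obstruction to legal triangulation in a minimally incompatible instance is \emph{supported} on boundedly many characters: one studies the structure of minimal $a$--$b$ separators and of the maximal legal almost-triangulations of $G(C)$, and tries to show that any character not participating in some chordless witnessing structure can be removed without affecting triangulability, contradicting minimality once the number of characters exceeds a function of $r$. The known base cases $f(2)=2$ (the four-gamete/pairwise condition) and $f(3)=3$ give the shape of such an analysis and suggest an induction on $r$ combined with a case analysis on how the states of a fixed character split under a candidate triangulation.

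The main obstacle --- and the reason this remains a conjecture --- is precisely the final bounding step: there is no known way to control, uniformly in $r$, how many characters can jointly conspire to block every legal triangulation. The per-character vertex count $r$ limits each character's local contribution but does not by itself limit how many characters interact globally, and the lower-bound construction of this paper, producing minimally incompatible families of $\lfloor\frac{r}{2}\rfloor\cdot\lceil\frac{r}{2}\rceil+1$ characters, shows that any valid $f(r)$ must grow at least quadratically and therefore rules out any naive linear or character-by-character peeling argument. I would regard a successful proof of the charging/obstruction lemma above, matching this quadratic growth, as the crux on which the whole conjecture turns.
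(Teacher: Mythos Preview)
The statement you are attempting to prove is labeled a \emph{conjecture} in the paper, and the paper does not prove it. The paper's contribution is in the opposite direction: it establishes the lower bound $f(r)\ge\lfloor\frac{r}{2}\rfloor\cdot\lceil\frac{r}{2}\rceil+1$ via explicit constructions (Theorems~\ref{thmQuartetLB} and~\ref{lemCharacterLB}), but nowhere shows that any finite $f(r)$ suffices. So there is no ``paper's own proof'' to compare against.

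Your write-up is honest about this: you correctly isolate the nontrivial direction, translate it via Buneman's partition-intersection-graph characterization into a statement about legal triangulations, and then identify the missing piece as a uniform bound on the size of a minimal obstruction. But that missing piece \emph{is} the conjecture. The ``target lemma'' you describe --- that in a minimally incompatible instance only boundedly many characters can participate in blocking every legal triangulation --- is exactly what no one knows how to prove; your sketch of ``studying minimal separators and maximal legal almost-triangulations'' does not supply a mechanism that forces such a bound, and you say as much in your final paragraph. So what you have written is a well-informed discussion of why the problem is hard and what a proof would need to accomplish, not a proof or even a proof outline with a plausible path to completion. That is appropriate given the statement's status, but it should not be presented as a proof proposal.
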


If Conjecture \ref{conjFR} is true, it would follow that we can determine if any set $C$ of $r$-state characters is compatible by testing the compatibility of each subset of $f(r)$ characters of $C$, and, in case of incompatibility, output a subset of at most $f(r)$ characters of $C$ that is incompatible.

A classic result on binary character compatibility shows that $f(2)=2$; see \cite{Buneman1971a,Estabrook1976a,Gusfield1991a,Meacham1983a,Semple2003a}. 
In 1975, Fitch \cite{Fitch1975a,Fitch1977a} gave an example of a set $C$ of three 3-state characters such that $C$ is incompatible, but every pair of characters in $C$ is compatible, showing that $f(3) \ge 3$. In 1983, Meacham \cite{Meacham1983a} generalized this example to $r$-state characters for every $r \ge 3$, demonstrating a lower bound of $f(r) \ge r$ for all $r$; see also \cite{Lam2011a}.
A recent breakthrough by Lam, Gusfield, and Sridhar \cite{Lam2011a} showed that $f(3)=3$.  While the previous results could lead one to conjecture that $f(r)=r$ for all $r$, Habib and To \cite{Habib2011a} recently disproved this possibility by exhibiting a set $C$ of five 4-state characters such that $C$ is incompatible, but every proper subset of the characters in $C$ are compatible, showing that $f(4) \ge 5$. They conjectured that $f(r) \ge r+1$ for every $r \ge 4$.

The main result of this paper is to prove the conjecture stated in \cite{Habib2011a} by giving a quadratic lower bound on $f(r)$.  Formally, we show that for every integer $r \ge 2$, there exists a set $C$ of $r$-state characters such that all of the following conditions hold.
\begin{enumerate}
	\item $C$ is incompatible.
	\item Every proper subset of $C$ is compatible.
	\item $|C| = \lfloor\frac{r}{2}\rfloor\cdot\lceil\frac{r}{2}\rceil + 1$.
\end{enumerate}
Therefore, $f(r) \ge  \lfloor\frac{r}{2}\rfloor\cdot\lceil\frac{r}{2}\rceil + 1$ for every $r \ge 2$.

Our proof relies on a new result on quartet compatibility which we believe is of independent interest. We show that for every integer $n \ge 4$, there exists a set $Q$ of quartets over a set of $n$ labels such that all of the following conditions hold.
\begin{enumerate}
	\item $Q$ is incompatible.
	\item Every proper subset of $Q$ is compatible.
	\item $|Q|=\lfloor\frac{n-2}{2}\rfloor\cdot\lceil\frac{n-2}{2}\rceil + 1$.
\end{enumerate}
This represents an improvement over the previous lower bound on the maximum cardinality of such an incompatible set of quartets of  $n-2$ given in \cite{Steel1992a}.

We contrast our result on quartet compatibility with a result on the compatibility of triplets: For every $n \ge 3$, if $R$ is an incompatible set of triplets over $n$ labels, and $|R| > n-1$, then some proper subset of $R$ is incompatible. We show this upper bound is tight by exhibiting, for every $n \ge 3$, a set of $n-1$ triplets over $n$ labels such that $R$ is incompatible, but every proper subset of $R$ is compatible. The results given here on the compatibility of triplets appear to have been previously known \cite{SteelCommunications}, but are formally proven here.

\section{Preliminaries}

Given a graph $G$, we represent the vertices and edges of $G$ by $V(G)$ and $E(G)$ respectively.  We use the abbreviated notation $uv$ for an edge $\{u,v\} \in E(G)$. For any $e \in E(G)$, $G-e$ represents the graph obtained from $G$ by deleting edge $e$. For any integer $i$, we use $[i]$ to represent the set $\{1, 2, \cdots, i\}$.

\subsection{Unrooted Phylogenetic Trees}

An {\em unrooted phylogenetic tree} (or just {\em tree}) is a tree $T$ whose leaves are in one to one correspondence with a label set $\L(T)$, and has no vertex of degree two. See Fig. \ref{figCompatibleTree}(a) for an example. For a collection $\T$ of trees, the {\em label set} of $\T$, denoted $\L(\T)$, is the union of the label sets of the trees in $\T$. 
A tree is {\em binary} if every internal (non-leaf) vertex has degree three. A {\em quartet} is a binary tree with exactly four leaves. A quartet with label set $\{a,b,c,d\}$ is denoted $ab|cd$ if the path between the leaves labeled $a$ and $b$ does not intersect with the path between the leaves labeled $c$ and $d$. 
 

\begin{figure}[!b]
\centering
\subfloat[]{
\begin{tikzpicture}[scale=1]
\node (mid) at (0:0) [circle,inner sep=1.5,fill=black] {};
\node (v1) at (120:.6) [circle,inner sep=1.5,fill=black] {};
\node (v2) at (240:.6) [circle,inner sep=1.5,fill=black] {};
\node (v3) at (0:.6) [circle,inner sep=1.5,fill=black] {};
\node (a) at (90:1.2) [circle,inner sep=1.5,fill=black,label=right:$a$] {};
\node (b) at (150:1.2) [circle,inner sep=1.5,fill=black,label=left:$b$] {};
\node (c) at (210:1.2) [circle,inner sep=1.5,fill=black,label=left:$c$] {};
\node (d) at (270:1.2) [circle,inner sep=1.5,fill=black,label=right:$d$] {};
\node (e) at (30:1.2) [circle,inner sep=1.5,fill=black,label=right:$e$] {};
\node (f) at (330:1.2) [circle,inner sep=1.5,fill=black,label=right:$f$] {};
\path[-] (mid) edge (v1);
\path[-] (mid) edge (v2);
\path[-] (mid) edge (v3);
\path[-] (v1) edge (a);
\path[-] (v1) edge (b);
\path[-] (v2) edge (c);
\path[-] (v2) edge (d);
\path[-] (v3) edge (e);
\path[-] (v3) edge (f);
\end{tikzpicture}
}
\hspace{3em}
\qquad
\hspace{3em}
\subfloat[]{
\begin{tikzpicture}[scale=1]
\node (mid) at (0:0) [circle,inner sep=1.5,fill=black] {};
\node (v1) at (180:.6) [circle,inner sep=1.5,fill=black] {};
\node (v2) at (0:.6) [circle,inner sep=1.5,fill=black] {};
\node (a) at (150:1.2) [circle,inner sep=1.5,fill=black,label=right:$a$] {};
\node (b) at (210:1.2) [circle,inner sep=1.5,fill=black,label=left:$b$] {};
\node (c) at (30:1.2) [circle,inner sep=1.5,fill=black,label=left:$c$] {};
\node (d) at (330:1.2) [circle,inner sep=1.5,fill=black,label=right:$d$] {};
\node (e) at (90:.6) [circle,inner sep=1.5,fill=black,label=right:$e$] {};
\path[-] (mid) edge (v1);
\path[-] (mid) edge (v2);
\path[-] (mid) edge (e);
\path[-] (v1) edge (a);
\path[-] (v1) edge (b);
\path[-] (v2) edge (c);
\path[-] (v2) edge (d);
\end{tikzpicture}
}
\caption{(a) shows a tree $T$ witnessing that the quartets $q_1=ab|ce$, $q_2=cd|bf$, and $q_3=ad|ef$ are compatible; $T$ is also a witness that the characters $\chi_{q_1} = ab|ce|d|f$, $\chi_{q_2}=cd|bf|a|e$, and $\chi_{q_3}=ad|ef|b|c$ are compatible; (b) shows $T|\{a,b,c,d,e\}$.}
\label{figCompatibleTree}
\end{figure}
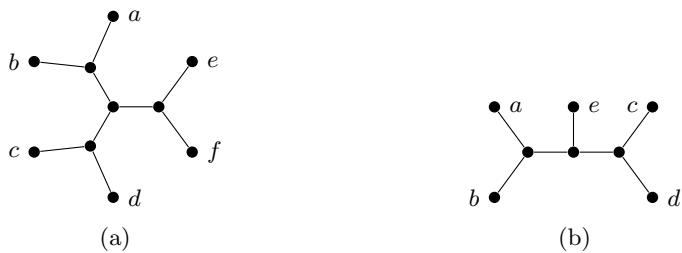

For a tree $T$, and a label set $L \subseteq \L(T)$, the {\em restriction} of $T$ to $L$, denoted by $T|L$, is the tree obtained from the minimal subtree of $T$ connecting all the leaves with labels in $L$ by suppressing vertices of degree two. 
See Fig. \ref{figCompatibleTree}(b) for an example. A tree $T$ {\em displays} another tree $T'$, if $T'$ can be obtained from $T|\L(T')$ by contracting edges. A tree $T$ displays a collection of trees $\T$ if $T$ displays every tree in $\T$. If such a tree $T$ exists, then we say that $\T$ is {\em compatible}; otherwise, we say that $\T$ is {\em incompatible}. See Fig. \ref{figCompatibleTree}(a) for an example. Determining if a collection of unrooted trees is compatible is NP-complete \cite{Steel1992a}. 


\subsection{Multi-State Characters}

There is also a notion of compatibility for sets of partitions of a label set $L$.
A {\em character} $\chi$ on $L$ is a partition of $L$; the parts of $\chi$ are called {\em states}. If $\chi$ has at most $r$ parts, then $\chi$ is an $r$-state character. Given a tree $T$ with $L=L(T)$ and a state $s$ of $\chi$, we denote by $T_s(\chi)$ the minimal subtree of $T$ connecting all leaves with labels having state $s$ for $\chi$. We say that $\chi$ is {\em convex} on $T$, or equivalently $T$ {\em displays} $\chi$, if the subtrees $T_i(\chi)$ and $T_j(\chi)$ are vertex disjoint for all states $i$ and $j$ of $\chi$ where $i \ne j$. 
A collection $C$ of characters is {\em compatible} if there exists a tree $T$ on which every character in $C$ is convex. If no such tree exists, then we say that $C$ is {\em incompatible}.
See Fig. \ref{figCompatibleTree}(a) for an example. The {\em perfect phylogeny problem} (or {\em character compatibility problem}) is to determine whether a given set of characters is compatible.

There is a natural correspondence between quartet compatibility and character compatibility that we now describe. Let $Q$ be a set of quartets, $n=|\L(Q)|$, and $r=n-2$. For each $q=ab|cd \in Q$, we define the $r$-state character corresponding to $q$, denoted $\chi_q$, as the character where $a$ and $b$ have state 0 for $\chi_q$; $c$ and $d$ have state 1 for $\chi_q$; and, for each $\ell \in \L(Q) \setminus \{a,b,c,d\}$, there is a state $s$ of $\chi_q$ such that $\ell$ is the only label with state $s$ for character $\chi_q$ (see Example \ref{exCcorQ}). We define the set of $r$-state characters corresponding to $Q$ by
$C_Q = \bigcup_{q \in Q} \{\chi_q\}.$

\begin{example}\label{exCcorQ}
Consider the quartets and characters given in Fig. \ref{figCompatibleTree}(a): $\chi_{q_1}$ is the character corresponding to $q_1$, $\chi_{q_2}$ is the character corresponding to $q_2$, and $\chi_{q_3}$ is the character corresponding to $q_3$.
\end{example}

The proof of the following lemma relating quartet compatibility to character compatibility is straightforward, and given in the appendix.

\begin{lemma}\label{lemQtoC}
A set $Q$ of quartets is compatible if and only if $C_Q$ is compatible.
\end{lemma}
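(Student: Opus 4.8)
The plan is to reduce the lemma to a pointwise equivalence on a single tree: for any fixed tree $T$ with $\L(T) = \L(Q)$ and any quartet $q = ab|cd \in Q$, I will show that $T$ displays $q$ if and only if $\chi_q$ is convex on $T$. Granting this, the lemma follows at once, since a tree displays every quartet in $Q$ precisely when it renders every character in $C_Q$ convex, and both notions of compatibility may be witnessed by a tree whose label set is exactly $\L(Q)$.

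First I would pin down the two label sets so that the witnesses can be compared on a common leaf set. Each $\chi_q$ is a partition of all of $\L(Q)$, so any tree displaying $C_Q$ necessarily has label set $\L(Q)$. On the quartet side, if some tree $T$ displays $Q$ then so does $T|\L(Q)$, so we may assume without loss of generality that the witnessing tree has label set $\L(Q)$ as well.

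Next I would establish the pointwise equivalence. Since $ab|cd$ is a binary four-leaf tree, it is the most resolved tree on $\{a,b,c,d\}$, and the only tree that contracts to it is itself; hence $T$ displays $q$ if and only if $T|\{a,b,c,d\} = ab|cd$, equivalently the path in $T$ between $a$ and $b$ is vertex-disjoint from the path between $c$ and $d$. For the character $\chi_q$, its nonsingleton states are $\{a,b\}$ and $\{c,d\}$, whose minimal connecting subtrees $T_{\{a,b\}}(\chi_q)$ and $T_{\{c,d\}}(\chi_q)$ are exactly these two paths. The remaining states of $\chi_q$ are singletons $\{\ell\}$, whose subtrees are the single leaves $\ell$; since a leaf other than the endpoints of a path never lies on that path, each such singleton subtree is automatically vertex-disjoint from every other state subtree. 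Thus the only nontrivial convexity condition for $\chi_q$ is that $T_{\{a,b\}}(\chi_q)$ and $T_{\{c,d\}}(\chi_q)$ be vertex-disjoint, which is precisely the condition that $T$ display $q$. This yields both directions of the equivalence.

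Combining the two steps, a tree $T$ over $\L(Q)$ displays all of $Q$ if and only if every $\chi_q$ is convex on $T$, so such a $T$ exists for $Q$ exactly when one exists for $C_Q$. I expect the only delicate point to be the bookkeeping about edge contraction in the definition of \emph{displays} — making sure that displaying the binary quartet $ab|cd$ forces $T|\{a,b,c,d\}$ to equal $ab|cd$ rather than merely to contract to it — together with the easy but necessary verification that the singleton states of $\chi_q$ impose no additional constraints.
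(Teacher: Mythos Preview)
Your proposal is correct and follows essentially the same approach as the paper: reduce to the per-quartet equivalence ``$T$ displays $ab|cd$ iff $\chi_q$ is convex on $T$'' by observing that the only nontrivial convexity constraint is disjointness of the $a$--$b$ and $c$--$d$ paths, the singleton states being automatically disjoint. You are in fact a bit more careful than the paper about aligning the label sets of the witnessing trees and about why singleton leaves cannot lie on the two paths, but the argument is the same.
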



\subsection{Quartet Graphs}


\begin{figure}[!b]
\centering
\subfloat[$G_Q$]{
	\centering
	\begin{tikzpicture}[scale=1]
	\node (f) at (0:1) [circle,inner sep=1.5,fill=black,label=right:$f$] {};
	\node (b) at (60:1) [circle,inner sep=1.5,fill=black,label=above:$b$] {};
	\node (a) at (120:1) [circle,inner sep=1.5,fill=black,label=above:$a$] {};
	\node (d) at (180:1) [circle,inner sep=1.5,fill=black,label=left:$d$] {};
	\node (c) at (240:1) [circle,inner sep=1.5,fill=black,label=below:$c$] {};
	\node (e) at (300:1) [circle,inner sep=1.5,fill=black,label=below:$e$] {};
	
	\path[-] (a) edge node[above, inner sep=1]{$q_{1}$} (b);
	\path[-] (c) edge node[below, inner sep=1]{$q_{1}$} (e);
	\path[-] (c) edge node[below left, inner sep=1]{$q_{2}$} (d);
	\path[-] (b) edge node[above right, inner sep=1]{$q_{2}$} (f);
	\path[-] (e) edge node[below right, inner sep=1]{$q_{3}$} (f);
	\path[-] (a) edge node[above left, inner sep=1]{$q_{3}$} (d);
	\end{tikzpicture}
}
\quad
\subfloat[$\{a,b\} \rightarrow g$]
{
	\centering
	\begin{tikzpicture}[scale=1]
	\node (f) at (0:1) [circle,inner sep=1.5,fill=black,label=right:$f$] {};
	\node (ab) at (90:.75) [circle,inner sep=1.5,fill=black,label=above:$g$] {};
	\node (d) at (180:1) [circle,inner sep=1.5,fill=black,label=left:$d$] {};
	\node (c) at (240:1) [circle,inner sep=1.5,fill=black,label=below:$c$] {};
	\node (e) at (300:1) [circle,inner sep=1.5,fill=black,label=below:$e$] {};
	
	\path[-] (c) edge node[below left, inner sep=1]{$q_{2}$} (d);
	\path[-] (ab) edge node[above right, inner sep=1]{$q_{2}$} (f);
	\path[-] (e) edge node[below right, inner sep=1]{$q_{3}$} (f);
	\path[-] (ab) edge node[above left, inner sep=1]{$q_{3}$} (d);
	\end{tikzpicture}
}
\quad
\subfloat[$\{e,f\} \rightarrow h$]
{
	\centering
	\begin{tikzpicture}[scale=1]
	\node (ef) at (330:.75) [circle,inner sep=1.5,fill=black,label=below:$h$] {};
	\node (ab) at (90:.75) [circle,inner sep=1.5,fill=black,label=above:$g$] {};
	\node (d) at (180:.75) [circle,inner sep=1.5,fill=black,label=above:$d$] {};
	\node (c) at (240:.75) [circle,inner sep=1.5,fill=black,label=below:$c$] {};
	
	\path[-] (c) edge node[above right, inner sep=1]{$q_{2}$} (d);
	\path[-] (ab) edge node[above right, inner sep=1]{$q_{2}$} (ef);
	\end{tikzpicture}
}
\quad
\subfloat[$\{g,h\} \rightarrow i$]
{
	\centering
	\hspace{5pt}
	\begin{tikzpicture}[scale=1]
	\node (abef) at (45:.5) [circle,inner sep=1.5,fill=black,label=above:$i$] {};
	\node (d) at (180:.5) [circle,inner sep=1.5,fill=black,label=above:$d$] {};
	\node (c) at (270:.5) [circle,inner sep=1.5,fill=black,label=below:$c$] {};
	\end{tikzpicture}
	\hspace{5pt}
}
\caption{The quartet graph $G_Q$ for the set of quartets given in Fig. \ref{figCompatibleTree} is shown in (a). A complete unification sequence for $G_Q$ is shown in (a)--(d). In the figures, the unification of vertices $U$ to a new vertex $u$ is denoted $U \rightarrow u$.}
\label{figQuartetGraph}
\end{figure}
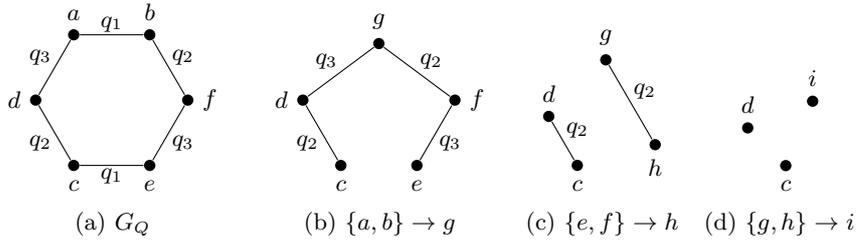

We now give a brief overview of quartet graphs which were introduced in \cite{Grunewald2008a}, and characterize when a collection of quartets is compatible.

Let $Q$ be a collection of quartets with label set $L$.  The {\em quartet graph} $G_{Q}$ on $Q$ is the edge-colored graph defined as follows. 

\begin{enumerate}
\item There is a vertex $\ell$ in $G_Q$ for each $\ell$ in $L$. 

\item For every quartet $q = ab | cd$  in $Q$, $G_{Q}$ has two edges $ab$ and $cd$, both labeled by $q$.
We call an edge labeled by $q$ a {\em $q$-colored} edge.
\end{enumerate}

An example of a quartet graph is given in Fig.~\ref{figQuartetGraph}(a). Let $G$ be any edge colored graph. Let $U \subseteq V(G)$ such that for any color $c$, at most one $c$-colored edge is incident on the vertices of $U$. The {\em unification} of vertices $U$ in $G$ is the graph $G'$ obtained from $G$ as follows:

\begin{enumerate}
\item Add a new vertex $u$ to $G'$. 
\item For each $c$-colored edge $vw$ in $G$ where $v \in U$ and $w \not\in U$, add a $c$-colored edge $uw$ to $G'$.
\item For each $c$-colored edge $vw$ in $G$ where both $v \in U$ and $w \in U$, delete all $c$-colored edges from $G'$.
\item Delete all vertices in $U$, and edges incident to vertices in $U$, from $G'$.
\end{enumerate}

See Figures \ref{figQuartetGraph} and \ref{figQ43QuartetGraph} for examples of unification.
A {\em unification sequence} for $G$ is a sequence $G_0\!\!=\!\!G, G_1, G_2, \ldots, G_k$ of graphs where, for any $i > 0$, $G_i$ is derived from $G_{i-1}$ by a unification operation. 
Note that every graph in a unification sequence that begins with a quartet graph is also a quartet graph.
A unification sequence is \emph{complete} if $G_k$ has no edges. See Fig.~\ref{figQuartetGraph}(a)-(d) for an example of a complete unification sequence. 
The following theorem is from \cite{Grunewald2008a}.

\begin{theorem}\label{thm:quartet_graphs}
A collection $Q$ of quartets is compatible if and only if there exists a complete unification sequence for the quartet graph $G_{Q}$.
\end{theorem}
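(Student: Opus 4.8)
The plan is to prove both directions by matching a single unification step to a single edge-contraction of a tree that displays $Q$, while maintaining the invariant that every graph in the sequence is itself the quartet graph of an induced quartet set over a shrinking partition of $L$. First I would record the structural fact already noted in the excerpt: each color class of $G_Q$ consists of exactly two edges, and a unification of $U$ either leaves a color with two edges (redirecting at most one endpoint onto the new vertex $u$) or deletes the color outright. The side condition that at most one $c$-colored edge is incident on $U$ is precisely what forces this dichotomy. Consequently I may regard each $G_i$ in a sequence as the quartet graph $G_{Q_i}$ of a set $Q_i$ of quartets whose labels are the clusters that form the vertices of $G_i$, where the clusters always partition $L$ (vertices are only merged, never discarded).

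For the forward direction, suppose $T$ displays $Q$ and induct on $|L|$. When $Q \neq \emptyset$ we have $|L| \ge 4$, and a longest path in $T$ locates an internal vertex adjacent to a set $U$ of at least two leaves, i.e.\ a set of siblings. The key observation is that, because $T$ displays every quartet, no quartet of $Q$ can place two siblings on opposite sides, nor involve three or more siblings: any such restriction of $T$ is an unresolved star and displays no quartet. Hence every color meets $U$ in at most one edge, unifying $U$ is legal, and the resulting graph is exactly the quartet graph of the quartets induced on the tree $T'$ obtained by collapsing $U$ to a single leaf. Since $T'$ displays $Q'$ and has fewer leaves, induction supplies a complete unification sequence for $G_{Q'}=G_1$, and prepending the first step completes the sequence for $G_Q$.

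For the reverse direction, suppose a complete unification sequence $G_0 = G_Q, \dots, G_k$ is given. I would read it as building a laminar family on $L$: the set $U_i$ merged at step $i$ becomes the children of a new internal node $u_i$, the original labels are the leaves, and the surviving vertices of the edgeless graph $G_k$ are attached to a common root; suppressing degree-two vertices yields a phylogenetic tree $T$ on $L$. To see that $T$ displays an arbitrary $q = ab|cd \in Q$, I track the destruction of its two $q$-colored edges. Since $G_k$ is edgeless, color $q$ is deleted at some step $j$ via the edge-deletion rule, so one of its two edges has both endpoints in $U_j$; the side condition then forces the other $q$-edge to be disjoint from $U_j$. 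Translating back to clusters, one of the pairs $\{a,b\}$, $\{c,d\}$ lies entirely inside the subtree of $u_j$ while the other lies entirely outside it (the clusters partition $L$, so being outside $U_j$ means being outside $\bigcup U_j$), and therefore $c,d$ witness that $u_j$ is not the whole tree. The edge above $u_j$ thus realizes the split $ab|cd$, and $T$ displays $q$.

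The main obstacle I anticipate is the bookkeeping behind the invariant ``$G_i = G_{Q_i}$'': one must check carefully that the redirection rule sends the induced quartets of $T$ to exactly the induced quartets of the contracted tree, and that the deletion rule fires precisely when (and only when) a near- or far-pair has been completely merged, so that no color survives once its two edges can no longer sit on four distinct clusters. Once this correspondence between unification steps and edge contractions is pinned down, both implications follow and the equivalence of the two notions of compatibility is immediate.
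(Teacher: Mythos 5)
The paper does not actually prove this statement---it imports it verbatim from Gr\"unewald et al.\ \cite{Grunewald2008a}---so there is no in-paper proof to compare against; your proposal is a self-contained argument, and it is essentially correct. Both directions are sound: in the forward direction, the longest-path argument does guarantee an internal vertex with at least two leaf-children (degree at least three, all neighbors but one being leaves), the sibling analysis correctly shows that every color has at most one edge incident on that cherry, and the collapse of the cherry to a single leaf produces a smaller tree displaying $Q_{G_1}$, so the induction closes. In the reverse direction, your key observation---that a color can only vanish via the both-endpoints-in-$U_j$ rule, and that the side condition then keeps the other pair entirely outside the cluster $u_j$---is exactly what makes the edge above $u_j$ in the laminar-family tree realize the split $ab|cd$. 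Two small points deserve tightening rather than constituting gaps. First, the parenthetical ``any such restriction of $T$ is an unresolved star'' is literally false in the case where two labels from \emph{opposite} sides of $q=ab|cd$ are siblings: there $T|\{a,b,c,d\}$ is the resolved quartet $ac|bd$, not a star; the correct (and sufficient) statement is simply that $T$ cannot display $ab|cd$ when $a$ and $c$ are siblings. Second, after a unification two distinct colors may come to have identical edge pairs, so $G_1$ may be the quartet graph of $Q_{G_1}$ only up to color multiplicity; since duplicate colors behave identically under every subsequent unification, the inductive hypothesis applied to $G_{Q_{G_1}}$ lifts to $G_1$, but this is worth one explicit sentence. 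With those repairs the argument is complete, and it has the virtue of making the paper self-contained where the authors chose to cite instead.
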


For a quartet graph $G$, 
we define the {\em quartet set corresponding to} $G$ as the set of quartets
$$
Q_{G} = \{ ab|cd : \mbox{there exists edges $ab$ and $cd$ of the same color in $G$}\}.
$$

\section{Compatibility of Quartets}


\begin{figure}[!b]
\centering
\subfloat[The quartet graph for $Q_{4,3}$.]{\label{figQ43}
\begin{tikzpicture}[scale=1.6]
\node (a1) at (1,.5) [circle,inner sep=1.5,fill=black,label=above:$a_1$] {};
\node (a2) at (2,.5) [circle,inner sep=1.5,fill=black,label=above:$a_2$] {};
\node (a3) at (3,.5) [circle,inner sep=1.5,fill=black,label=above:$a_3$] {};
\node (a4) at (4,.5) [circle,inner sep=1.5,fill=black,label=above:$a_4$] {};
\node (b1) at (1.5,-.5) [circle,inner sep=1.5,fill=black,label=below:$b_1$] {};
\node (b2) at (2.5,-.5) [circle,inner sep=1.5,fill=black,label=below:$b_2$] {};
\node (b3) at (3.5,-.5) [circle,inner sep=1.5,fill=black,label=below:$b_3$] {};

\path[-] (a1) edge[out=40, in=140] node[above, inner sep=1]{$q_{1,1}$} (a2);
\path[-] (a1) edge[out=320, in=220] node[above, inner sep=1]{$q_{1,2}$}(a2);
\path[-] (a2) edge[out=40, in=140] node[above, inner sep=1]{$q_{2,1}$} (a3);
\path[-] (a2) edge[out=320, in=220] node[above, inner sep=1]{$q_{2,2}$} (a3);
\path[-] (a3) edge[out=40, in=140] node[above, inner sep=1]{$q_{3,1}$} (a4);
\path[-] (a3) edge[out=320, in=220] node[above, inner sep=1]{$q_{3,2}$} (a4);

\path[-] (b1) edge[out=40, in=140] node[above, inner sep=1]{$q_{1,1}$} (b2);
\path[-] (b1) edge[out=0, in=180] node[above, inner sep=1]{$q_{2,1}$}(b2);
\path[-] (b2) edge[out=40, in=140] node[above, inner sep=1]{$q_{1,2}$} (b3);
\path[-] (b2) edge[out=0, in=180] node[above, inner sep=1]{$q_{2,2}$} (b3);
\path[-] (b1) edge[out=320, in=220] node[above, inner sep=1]{$q_{3,1}$} (b2);
\path[-] (b2) edge[out=320, in=220] node[above, inner sep=1]{$q_{3,2}$}(b3);

\path[-] (a1) edge node[below left, inner sep=.5]{$q_{0}$} (b1);
\path[-] (a4) edge node[below right, inner sep=1]{$q_{0}$} (b3);

\end{tikzpicture}
}
\quad
\subfloat[$\{a_2,a_4\} \rightarrow u$]{\label{figQ43Unification}
\hspace{24pt}
\begin{tikzpicture}[scale=1.6]
\node (a1) at (1,.5) [circle,inner sep=1.5,fill=black,label=above:$a_1$] {};
\node (a24) at (2,.5) [circle,inner sep=1.5,fill=black,label=above:$u$\,\,] {};
\node (a3) at (3,.5) [circle,inner sep=1.5,fill=black,label=right:$a_3$] {};
\node (b1) at (1,-.5) [circle,inner sep=1.5,fill=black,label=below:$b_1$] {};
\node (b2) at (2,-.5) [circle,inner sep=1.5,fill=black,label=below:$b_2$] {};
\node (b3) at (3,-.5) [circle,inner sep=1.5,fill=black,label=below:$b_3$] {};

\path[-] (a1) edge[out=40, in=140] node[above, inner sep=1]{$q_{1,1}$} (a24);
\path[-] (a1) edge[out=320, in=220] node[above, inner sep=1]{$q_{1,2}$}(a24);

\path[-] (a24) edge[out=70, in=110] node[above, inner sep=1]{$q_{2,1}$} (a3);
\path[-] (a24) edge[out=25, in=155] node[above, inner sep=1]{$q_{2,2}$} (a3);
\path[-] (a24) edge[out=350, in=190] node[above, inner sep=1]{$q_{3,1}$} (a3);
\path[-] (a24) edge[out=310, in=230] node[above, inner sep=1]{$q_{3,2}$} (a3);

\path[-] (b1) edge[out=40, in=140] node[above, inner sep=1]{$q_{1,1}$} (b2);
\path[-] (b1) edge[out=0, in=180] node[above, inner sep=1]{$q_{2,1}$}(b2);
\path[-] (b2) edge[out=40, in=140] node[above, inner sep=1]{$q_{1,2}$} (b3);
\path[-] (b2) edge[out=0, in=180] node[above, inner sep=1]{$q_{2,2}$} (b3);
\path[-] (b1) edge[out=320, in=220] node[above, inner sep=1]{$q_{3,1}$} (b2);
\path[-] (b2) edge[out=320, in=220] node[above, inner sep=1]{$q_{3,2}$}(b3);

\path[-] (a1) edge node[below left, inner sep=.5]{$q_{0}$} (b1);
\path[-] (a24) edge node[above right, inner sep=1]{$q_{0}$} (b3);

\end{tikzpicture}
\hspace{24pt}
}
\caption{The set of quartets $Q_{4,3}$.}
\label{figQ43QuartetGraph}
\end{figure}
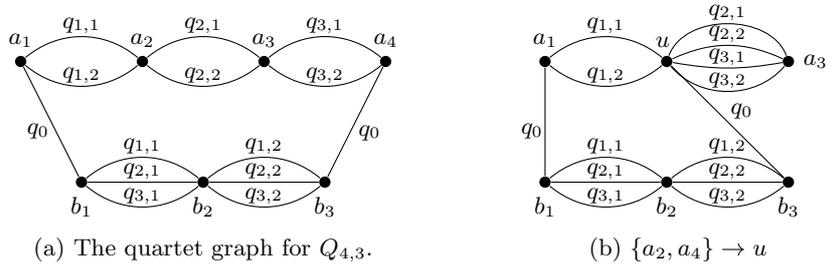

For every $s, t \ge 2$, we fix a set of labels $\L_{s,t} = \{a_1, a_2, \ldots, a_s, b_1, b_2, \ldots, b_t\}$ and define the set
\begin{equation*}
Q_{s,t} = \{ a_1 b_1 | a_s b_t \} \cup \bigcup_{i = 1}^{s-1} \bigcup_{j = 1}^{t-1} \{ a_i a_{i+1} | b_j b_{j+1} \}
\end{equation*}
of quartets with label set $\L_{s,t}$. 
We  denote the quartet $a_1 b_1 | a_s b_t$ by $q_0$, and a quartet of the form $a_i a_{i+1} | b_j b_{j+1}$ by $q_{i,j}$. 
See Fig.~\ref{figQ43QuartetGraph}(a) for an illustration. 

\begin{observation}\label{obsQSize}
For all $s,t \ge 2$, $|Q_{s,t}| = (s-1)(t-1)$ + 1.
\end{observation}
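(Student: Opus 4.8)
The plan is to establish the cardinality by a direct counting argument, handling the singleton quartet $q_0$ and the doubly-indexed family $\{q_{i,j}\}$ separately and then verifying that no quartet is counted twice.

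First I would count the doubly-indexed family. The index $i$ ranges over $\{1, \ldots, s-1\}$ and $j$ over $\{1, \ldots, t-1\}$, giving $(s-1)(t-1)$ index pairs. To conclude that these yield exactly $(s-1)(t-1)$ distinct quartets, I would argue that the assignment $(i,j) \mapsto q_{i,j}$ is injective. This follows because the label set of $q_{i,j} = a_i a_{i+1} | b_j b_{j+1}$ is $\{a_i, a_{i+1}, b_j, b_{j+1}\}$, which determines $i$ and $j$ uniquely: distinct values of $i$ give distinct pairs of consecutive $a$-labels, and similarly for $j$. Hence no two index pairs produce the same quartet.

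Next I would show that $q_0 = a_1 b_1 | a_s b_t$ differs from every $q_{i,j}$, so that forming the union with $\{q_0\}$ adds exactly one more element. For $s \ge 3$ or $t \ge 3$, the label sets already differ: the labels $\{a_1, a_s\}$ of $q_0$ form a pair of non-consecutive indices, so they cannot equal $\{a_i, a_{i+1}\}$, and analogously for the $b$-labels. The only case in which the label sets coincide is $s = t = 2$, where $q_0 = a_1 b_1 | a_2 b_2$ and the lone member of the family is $q_{1,1} = a_1 a_2 | b_1 b_2$; here the two quartets induce different bipartitions of the common label set $\{a_1, a_2, b_1, b_2\}$, so they remain distinct. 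In all cases, $q_0 \notin \{q_{i,j} : 1 \le i \le s-1,\ 1 \le j \le t-1\}$.

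Combining these counts gives $|Q_{s,t}| = (s-1)(t-1) + 1$. I expect the only subtlety to be the boundary case $s = t = 2$, where the label sets of $q_0$ and $q_{1,1}$ coincide and one must appeal to the induced bipartition rather than the label set alone to distinguish them; everywhere else the computation is immediate.
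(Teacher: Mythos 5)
Your count is correct and is exactly the direct argument the paper has in mind: the paper states this as an Observation with no proof, treating the $(s-1)(t-1)$ distinct quartets $q_{i,j}$ plus the one extra quartet $q_0$ as immediate from the definition. Your extra care in the $s=t=2$ case, where $q_0$ and $q_{1,1}$ share a label set and must be distinguished by their bipartitions, is a legitimate (if minor) point the paper glosses over, and it is consistent with the base case of Lemma~\ref{lemQIncompatible}, which relies on these being two distinct, mutually incompatible quartets.
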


The proof of the next lemma is straightforward and given in the appendix.

\begin{lemma}\label{lm:st_equality}
$Q_{s, t}$ is compatible if and only if $Q_{t, s}$ is compatible.
\end{lemma}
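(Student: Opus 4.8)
The plan is to prove the equivalence by a pure relabeling symmetry, without ever constructing a display tree. The key observation is that $Q_{s,t}$ and $Q_{t,s}$ are the \emph{same} set of quartets up to a renaming of leaf labels, and compatibility is manifestly invariant under such renaming: if a tree $T$ displays a set of quartets, then relabeling the leaves of $T$ by any bijection yields a tree displaying the correspondingly relabeled quartets. So it suffices to identify the right bijection and check that it transports one quartet set onto the other.

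First I would introduce the bijection $\phi \colon \L_{s,t} \to \L_{t,s}$ that swaps the two families of labels, namely $\phi(a_i) = b_i$ for $i \in [s]$ and $\phi(b_j) = a_j$ for $j \in [t]$. This is well defined and bijective because $\L_{s,t} = \{a_1,\dots,a_s,b_1,\dots,b_t\}$ is carried exactly onto $\{b_1,\dots,b_s,a_1,\dots,a_t\} = \L_{t,s}$. Extending $\phi$ to quartets in the obvious way, $\phi(wx|yz) = \phi(w)\phi(x)|\phi(y)\phi(z)$, I would then verify that $\phi$ carries the generating quartets of $Q_{s,t}$ onto those of $Q_{t,s}$.

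The heart of the argument, and the only place requiring care, is this bookkeeping, which relies on the symmetries of quartet notation ($wx|yz = xw|yz = yz|wx$). For the distinguished quartet, $\phi(q_0) = \phi(a_1 b_1 | a_s b_t) = b_1 a_1 | b_s a_t = a_1 b_1 | a_t b_s$, which is exactly the distinguished quartet $q_0$ of $Q_{t,s}$. For a generic quartet, $\phi(q_{i,j}) = \phi(a_i a_{i+1} | b_j b_{j+1}) = b_i b_{i+1} | a_j a_{j+1} = a_j a_{j+1} | b_i b_{i+1}$, which is precisely the quartet indexed $q_{j,i}$ in $Q_{t,s}$; here the ranges match because $i \in [s-1]$ becomes the $b$-index in $Q_{t,s}$ while $j \in [t-1]$ becomes the $a$-index. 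Hence $\phi(Q_{s,t}) = Q_{t,s}$ as sets of quartets.

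Finally I would conclude: since $\phi$ is a bijection of label sets and $Q_{t,s} = \phi(Q_{s,t})$, a tree displays $Q_{s,t}$ if and only if the same tree with its leaves relabeled by $\phi$ displays $Q_{t,s}$. Thus $Q_{s,t}$ is compatible if and only if $Q_{t,s}$ is compatible. I do not anticipate a genuine obstacle here; the proof is essentially routine once the swap map is identified, so the two things to get right are a clean statement of the invariance of compatibility under relabeling and the index bookkeeping for $q_{i,j} \mapsto q_{j,i}$. (One could alternatively route the argument through the character correspondence and Lemma~\ref{lemQtoC}, but the direct relabeling is shorter and self-contained.)
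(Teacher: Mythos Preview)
Your proposal is correct and is essentially the same argument as the paper's: the paper defines the identical swap map $g(a_i)=b_i$, $g(b_i)=a_i$ from $\L_{s,t}$ to $\L_{t,s}$ and then asserts (leaving the verification to the reader) that a quartet lies in $Q_{s,t}$ if and only if its $g$-image lies in $Q_{t,s}$. Your write-up is in fact more explicit, spelling out the $q_0$ and $q_{i,j}\mapsto q_{j,i}$ bookkeeping and the relabeling-invariance step that the paper leaves implicit.
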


\begin{lemma}\label{lemQIncompatible}
For all $s,t \ge 2$, $Q_{s,t}$ is incompatible.
\end{lemma}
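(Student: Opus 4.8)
The plan is to argue directly about a hypothetical host tree, bypassing the quartet-graph machinery of Theorem~\ref{thm:quartet_graphs}. Suppose, for contradiction, that $Q_{s,t}$ is compatible and let $T$ be a tree displaying it; since restricting $T$ to $\L_{s,t}$ preserves every displayed quartet on these labels, I may assume $\L(T)=\L_{s,t}$. Write $T_a$ for the minimal subtree of $T$ connecting $a_1,\dots,a_s$ and $T_b$ for the minimal subtree connecting $b_1,\dots,b_t$. The whole argument reduces to showing that $T_a$ and $T_b$ are vertex-disjoint: this forces a split of $T$ that is flatly inconsistent with $q_0$. Throughout I use the elementary fact, immediate from the definitions, that $T$ displays $q_{i,j}=a_ia_{i+1}|b_jb_{j+1}$ exactly when the path $P(a_i,a_{i+1})$ is vertex-disjoint from $P(b_j,b_{j+1})$.

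The substantive step is to promote these individual path-disjointness statements to disjointness of the two spanning subtrees. First I would fix a column index $j$ and observe that $T_a=\bigcup_{i=1}^{s-1}P(a_i,a_{i+1})$: the right-hand side is connected (consecutive paths share an endpoint) and contains every $a$-leaf, so by minimality it equals $T_a$. Since each $P(a_i,a_{i+1})$ misses $P(b_j,b_{j+1})$ by the displayed quartet $q_{i,j}$, the entire subtree $T_a$ misses $P(b_j,b_{j+1})$. Taking the union over all $j$ and writing $T_b=\bigcup_{j=1}^{t-1}P(b_j,b_{j+1})$ by the same connectedness-plus-minimality identity, I conclude $T_a\cap T_b=\varnothing$.

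Once $T_a$ and $T_b$ are vertex-disjoint, any edge $e$ on the unique path of $T$ joining them separates, in $T-e$, all the $a$-leaves from all the $b$-leaves; as $\L(T)=\L_{s,t}$ contains only $a$- and $b$-labels, $T$ displays the split $\{a_1,\dots,a_s\}\mid\{b_1,\dots,b_t\}$. In particular $a_1,a_s$ lie on one side and $b_1,b_t$ on the other, so the quartet $T$ induces on $\{a_1,a_s,b_1,b_t\}$ is $a_1a_s|b_1b_t$, whereas $q_0=a_1b_1|a_sb_t$ is a different resolution of the same four leaves; since a tree displays at most one resolution of any four of its leaves, this is the desired contradiction. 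The argument is uniform in $s,t\ge2$, requires no induction or separate base case, and uses every quartet of $Q_{s,t}$ (the $(s-1)(t-1)$ quartets $q_{i,j}$ to build the split and $q_0$ to break it), consistent with the fact that proper subsets will later be shown compatible. The one place where care is genuinely needed is the second paragraph: the identity $T_a=\bigcup_i P(a_i,a_{i+1})$ (and its $b$-analogue) is exactly what lets finitely many local path-disjointness facts combine into a single global split, so I expect that to be the crux and everything else to be bookkeeping.
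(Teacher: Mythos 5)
Your proof is correct, and it takes a genuinely different route from the paper. The paper proves Lemma~\ref{lemQIncompatible} by induction on $s+t$ using the quartet-graph characterization of compatibility (Theorem~\ref{thm:quartet_graphs}): assuming a complete unification sequence exists for $G_{Q_{s,t}}$, it analyzes the first unification step, shows the resulting quartet set contains an image of some $Q_{s-y+x,t}$ with strictly smaller parameter sum, and invokes the inductive hypothesis. You instead argue directly on a hypothetical display tree $T$: the identity $T_a=\bigcup_i P(a_i,a_{i+1})$ combined with the path-disjointness forced by each displayed $q_{i,j}$ gives $T_a\cap T_b=\varnothing$, after which $P(a_1,b_1)$ and $P(a_s,b_t)$ must both traverse the bridge between $T_a$ and $T_b$, so $T$ cannot also display $q_0$. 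Every step checks out: the equivalence ``$T$ displays $ab|cd$ iff $P(a,b)$ and $P(c,d)$ are vertex-disjoint'' is the standard consequence of the paper's definition of display, and both subtree identities follow from connectedness plus minimality as you state. What your approach buys is brevity and self-containment --- no induction, no base case, and no appeal to the unification machinery of \cite{Grunewald2008a}, which the paper introduces essentially only for this lemma; it also makes transparent why all $(s-1)(t-1)+1$ quartets are needed, dovetailing with Lemma~\ref{lemQSubsetCompatible}. The paper's approach offers a reusable reduction pattern (relabeling a unified quartet graph onto a smaller $Q_{s',t}$), but for this lemma your direct argument is the cleaner of the two.
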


\begin{proof}
We use induction on the size of $s + t$. 

\emph{Base case:} Let $s=2$, $t=2$ and thus, $s+t = 4$. The set $Q_{2, 2}$ contains only the two incompatible quartets $q_0=a_1b_1|a_2b_2$ and $q_{1,1}a_1a_2|b_1b_2$. Thus, $Q_{2,2}$ is incompatible. 

\emph{Induction step:} 
Assume that for every $s' <= s$ and $t' <= t$, where $s \geq 2$, $t \geq 2$, $s' \geq 2$, $t' \geq 2$ and $s + t \ge s' + t' + 1$, we have that $Q_{s',t'}$ is incompatible. 
To prove that $Q_{s,t}$ is incompatible, we will show that there is no complete unification sequence for the quartet graph $G=G_{Q_{s,t}}$. 
For sake of contradiction, assume that 
there exists a complete unification sequence $\S$ for $G$. 
Let $U$ be the set of vertices unified in the first unification operation of $\S$. 

For each $i \in [s]$ and $j \in [t]$, there exists edges of the same color incident on $a_i$ and $b_j$. Thus, if $U$ contains a vertex $a_i$, it cannot contain a vertex $b_j$. 
Then, w.l.o.g., by Lemma \ref{lm:st_equality}, we may assume that $U$ does not contain any vertex $b_j$ for $j \in [t]$.
Also, since the quartet $q_0=a_1b_1| a_sb_{t} \in Q_{s,t}$, it cannot be the case that $U$ contains both $a_1$ and $a_s$.  W.l.o.g., we assume that $U$ does not contain $a_1$. 

Let $a_x$ and $a_y$ be the two vertices in $U$ where $x$ is the smallest index over all of the vertices in $U$, and $y$ is the largest index over all of the vertices in $U$. Let $G'$ be the graph resulting from the unification of the vertices of $U$ in $G$, and let $u$ be the unique vertex in $V(G') \setminus V(G)$. Note that all edges between $a_{x-1}$ and $a_{x}$ in $G$ become edges between $a_{x-1}$ and $u$ in $G'$, and all edges between $a_{y}$ and $a_{y+1}$ in $G$ become edges between $u$ and $a_{y+1}$ in $G'$. See Fig~\ref{figQ43QuartetGraph} for an illustration. Let $Q=Q_{G'}$. 
Since the unification sequence $\S = G,G',G_1,\cdots,G_k$ is complete, the sequence $G',G_1,\cdots,G_k$ is also complete. Hence, the quartet set $Q$ is compatible.

Let $L = L(Q_{G'})$. 
Consider the quartet set $Q_{s-(y-x),t}$. 
Since $|U| > 1$, and, by assumption, $a_1 \not\in U$, we have that $1 < x < y \le s$.
Also, by assumption, $t \geq 2$. Hence, $Q_{s-y+x, t}$ is well defined. 
Let $h$ be an injective mapping from $L_{s-y+x, t}$ to $L$ defined by 
$$
	h(\ell) = \begin{cases}
		a_i & \mbox{ if } \ell \mbox{ is of the form } a_i \mbox{ where } 1 \le i < x\\
		u &\mbox{ if } \ell \mbox{ is of the form } a_i \mbox{ where } i = x\\
		a_{i+y-x} & \mbox{ if } \ell \mbox{ is of the form } a_i \mbox{ where } x < i \le s-y+x\\
		b_i & \mbox{ if } \ell \mbox{ is of the form } b_i \mbox{ where } 1 \le i \le t\\
	\end{cases}
$$
We will 
show that for every quartet 
$q=\ell_1 \ell_2 | \ell_3 \ell_4 \in Q_{s-y+x, t}$, there exists a quartet $h(\ell_1)h(\ell_2)|h(\ell_3)h(\ell_4) \in Q$.
Since $y>x$, $s-y+x+t < s+t$. 
By the inductive hypothesis, $Q_{s-y+x,t}$ is incompatible. It follows that $Q$ contains an incompatible subset of quartets, contradicting that $Q$ is compatible.
We have the following cases.
\begin{itemize}
\item[] {Case 1:} 
	$q=q_0$. 
	Then $\ell_1=a_1$, $\ell_2=b_1$, $\ell_3=a_{s-y+x}$, and $\ell_4=b_t$. 
	So, $h(\ell_1) = a_1$, $h(\ell_2)=b_1$, and $h(\ell_4)=b_t$.
	If $y=s$, then $h(\ell_3) = h(a_{s-y+x}) = h(a_x) = u$, and $a_1 b_1 | u b_t \in Q$. 
	If $y<s$, then $h(\ell_3) = h(a_{s-y+x}) = a_{s-y+x+y-x} = a_s$, and $a_1b_1|a_sb_t \in Q$.\\
\item[] {Case 2:} 
	$q=q_{i,j}$ for some $1 \le i < s-y+x$ and $1 \le j < t$. 
	Then $\ell_1=a_i$ and $\ell_2=a_{i+1}$, $\ell_3=b_j$, and $\ell_4=b_{j+1}$. 
	So, $h(\ell_3)=b_j$, and $h(\ell_4)=b_{j+1}$.
	Note that since $i < s - y + x$, we have that if $i \ge x$, then $y < s$.
	Hence, we have the following four possibilities to consider.\\
\begin{itemize}
\item[] {Case 2a:} 
		$1 \le i < x-1$.
		Since both $i < x$ and $i+1 < x$, $h(\ell_1) = a_i$, $h(\ell_2)=a_{i+1}$,
		and $a_ia_{i+1}|b_jb_{j+1} \in Q$.\\
\item[] {Case 2b:} 
	$i=x-1$.
	Then $h(\ell_1)\!=\!a_{x-1}$, $h(\ell_2)=u$, and $a_{x-1} u | b_j b_{j+1} \in Q$.\\
\item[] {Case 2c:} 
	$i=x$ and $y<s$.
	Then, $h(\ell_1)=h(a_x)=u$, $h(\ell_2)=h(a_{x+1})=a_{x+1+y-x}=a_{y+1}$,  and $ua_{y+1}|b_jb_{j+1} \in Q$.\\
\item[] {Case 2d:} 
	$i>x$ and $y<s$.
	Then, $h(\ell_1) = h(a_i) = a_{i+y-x}$ and $h(\ell_2)= h(a_{i+1}) = a_{i+1+y-x}$.
	Since $i>x$, it follows that both $ i + y - x > y$ and $ i + 1+ y - x > y$. 
	Since $i < s-y+x$, it follows that both $ i + y - x < s$ and $ i + y - x + 1 <= s$.
	Hence, $a_{i+y-x} a_{i+1+y-x}|b_jb_{j+1} \in Q$.
\end{itemize}
\end{itemize}
In every case, we have shown that $h(\ell_1)h(\ell_2)|h(\ell_3)h(\ell_4) \in Q$.
\qed
\end{proof}

\begin{lemma}\label{lemQSubsetCompatible}
For all $s,t \ge 2$, and every $q \in Q_{s,t}$, $Q_{s,t} \setminus \{q\}$ is compatible.
\end{lemma}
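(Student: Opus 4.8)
My plan is to prove compatibility directly from the definition, by exhibiting for each $q\in Q_{s,t}$ an explicit tree $T$ on $\L_{s,t}$ that displays every quartet in $Q_{s,t}\setminus\{q\}$; path-disjointness of the relevant subtrees will be checked by hand. There are two cases, according to whether the removed quartet is $q_0$ or one of the $q_{i_0,j_0}$. (One could instead phrase the same witnesses as complete unification sequences and invoke Theorem~\ref{thm:quartet_graphs}, but explicit trees are more transparent here.)

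\emph{Case $q=q_0$.} Here I would take $T$ to consist of a single internal edge $xy$ with all of $a_1,\dots,a_s$ attached to $x$ and all of $b_1,\dots,b_t$ attached to $y$; since $s,t\ge 2$ both endpoints have degree at least three. For any $q_{i,j}=a_ia_{i+1}|b_jb_{j+1}$, the path joining $a_i$ and $a_{i+1}$ stays inside the $a$-star at $x$ while the path joining $b_j$ and $b_{j+1}$ stays inside the $b$-star at $y$, so the two paths are vertex-disjoint and $q_{i,j}$ is displayed.

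\emph{Case $q=q_{i_0,j_0}$.} This is the main case. The idea is to split the labels at the removed indices into $A_1=\{a_1,\dots,a_{i_0}\}$, $A_2=\{a_{i_0+1},\dots,a_s\}$, $B_1=\{b_1,\dots,b_{j_0}\}$, $B_2=\{b_{j_0+1},\dots,b_t\}$, all nonempty because $1\le i_0\le s-1$ and $1\le j_0\le t-1$, and to build $T$ from a single central edge $xy$ with four stars attached: the stars on $A_1$ and $B_1$ hang off $x$, and the stars on $A_2$ and $B_2$ hang off $y$ (any star with a single leaf is replaced by attaching that leaf directly to $x$ or $y$, suppressing the degree-two center). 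Then the central edge separates $A_1\cup B_1$, which contains $a_1$ and $b_1$, from $A_2\cup B_2$, which contains $a_s$ and $b_t$, so $q_0=a_1b_1|a_sb_t$ is displayed. For a quartet $q_{i,j}$ with $(i,j)\neq(i_0,j_0)$ I would verify disjointness by cases on whether each of the $a$-pair and the $b$-pair lies within a single star (when $i\neq i_0$, resp.\ $j\neq j_0$) or straddles the central edge (when $i=i_0$, resp.\ $j=j_0$).

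The crux is the straddling quartets $q_{i_0,j}$ with $j\neq j_0$ and $q_{i,j_0}$ with $i\neq i_0$, which defeat naive caterpillar layouts. The key observation is that the path joining a straddling pair uses only the central vertices $x,y$ together with the centers of its own two stars, and never enters the opposing pair's star, whereas the non-straddling pair's path stays inside a single star and avoids $x$ and $y$ entirely; hence the two paths stay vertex-disjoint. Thus every $q_{i,j}$ with $(i,j)\neq(i_0,j_0)$ is displayed, and only for the removed quartet $q_{i_0,j_0}$ do both pairs straddle the edge, forcing both paths through $x$ and $y$ — precisely the quartet that $T$ is permitted to miss. I expect this case analysis on the straddling pairs to be the main (though routine) obstacle; everything else is immediate from the block structure of the four stars.
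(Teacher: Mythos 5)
Your proposal is correct and uses essentially the same witness trees as the paper: the double star across a single edge for $q_0$, and the four-star tree split at the removed indices $(i_0,j_0)$ for $q=q_{i_0,j_0}$. The only difference is presentational — the paper verifies the second case by tabulating the connecting subgraph for all eight sign patterns of $(i-i_0,\,j-j_0)$, whereas your uniform "at most one of the two pairs straddles the central edge" argument covers the same cases more cleanly (and also handles the degree-two star centers that arise when $i_0=1$ or $j_0=t-1$, which the paper glosses over).
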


\begin{proof}
Let $q \in Q_{s,t}$. Either $q=q_0$ or $q=q_{x,y}$ for some $1 \le x < s$ and $1 \le y < t$. In every case, we present a tree witnessing that $Q_{s,t} \setminus \{q\}$ is compatible.
\begin{itemize}
\item[] {\em Case 1.} Suppose $q=q_0$. Create the tree $T$ as follows: There is a node for each label in $\L_{s,t}$, and two additional nodes $a$ and $b$. There is an edge $ab$. For every $a_x \in \L_{s,t}$ there is an edge $a_x a$. For every $b_x \in \L_{s,t}$, there is an edge $b_x b$. There are no other nodes or edges in $T$. See Fig. \ref{figMissingQuartet}(a) for an illustration of $T$. Consider any quartet $q \in Q_{s,t} \setminus \{q_0\}$. Then $q = a_i a_{i+1} | b_j b_{j+1}$ for some $1 \le i < s$ and $1 \le j < t$. Then, the minimal subgraph of $T$ connecting leaves with labels in $\{a_i, a_{i+1}, b_j, b_{j+1}\}$ is the quartet $q$.\\
\item[] {\em Case 2.} Suppose $q=q_{x,y}$ for some $1 \le x < s$ and $1 \le y < t$. Create the tree $T$ as follows: There is a node for each label in $\L_{s,t}$ and six additional nodes $\alow$, $\blow$, $\ell$, $h$, $\ahigh$, and $\bhigh$. There are edges $\alow \ell$, $\blow \ell$, $\ell h$, $h \ahigh$, and $h \bhigh$. For every $a_i \in \L_{s,t}$, there is an edge $a_i \alow$ if $i \le x$, and an edge $a_i \ahigh$ if $i > x$. For every $b_j \in \L_{s,t}$ there is an edge $b_j \blow$ if $j \le x$, and an edge $b_j \bhigh$ if $j > y$. There are no other nodes or edges in $T$. See Fig. \ref{figMissingQuartet}(b).  Now consider any quartet $q \in Q_{s,t} \setminus \{q_{x,y}\}$. Either $q=q_0$ or $q = q_{i,j}$ where $i \ne x$ or $j \ne y$. If $q=q_0$, then the minimal subgraph of $T$ connecting leaves with labels in $\{a_1, b_1, a_s, b_t\}$ is the subtree of $T$ induced by the nodes in $\{a_1, \alow, \ell, \blow, b_1, a_s, \ahigh, h, \bhigh, b_t\}$. Suppressing all degree two vertices results in a tree that is the same as $q_0$. So $T$ displays $q$. So assume that $q=a_i a_{i+1} | b_j b_{j+1}$ where $i \ne x$ or $j \ne y$. We define the following subset of the nodes in $T$:
\begin{equation*}
V = \begin{cases}
	\{a_i, a_{i+1}, \alow, \ell, \blow, b_j, b_{j+1} \} & \mbox{ if }  i < x \mbox{ and } j < y,\\
	\{a_i, a_{i+1}, \alow, \ell, b_y, \blow, h, \bhigh, b_{y+1} \} & \mbox{ if } i < x \mbox{ and } j = y,\\
	\{a_i, a_{i+1}, \alow, \ell, h, \bhigh, b_j, b_{j+1} \} & \mbox{ if } i < x \mbox{ and } j > y,\\
	\{a_x, \alow, \ell, h, \ahigh, a_{x+1}, \blow, b_j, b_{j+1} \} & \mbox{ if } i = x \mbox{ and } j < y,\\
	\{a_x, \alow, \ell, h, \ahigh, a_{x+1}, \bhigh, b_j, b_{j+1} \} & \mbox{ if } i = x \mbox{ and } j > y,\\
	\{a_j, a_{j+1},\ahigh, h, \ell, \blow, b_j, b_{j+1} \}  & \mbox{ if } i > x \mbox{ and } j < y,\\
	\{a_j, a_{j+1},\ahigh, h, b_y, \blow, \ell, \bhigh, b_{y+1} \} & \mbox{ if } i > x \mbox{ and } j = y,\\
	\{a_j, a_{j+1},\ahigh, h, \bhigh, b_j, b_{j+1} \} & \mbox{ if } i > x \mbox{ and } j > y.
\end{cases}
\end{equation*}
Now, the subgraph of $T$ induced by the nodes in $V$ is the minimal subgraph of $T$ connecting leaves with labels in $q$. Suppressing all degree two vertices results in a tree that is the same as $q$. Hence, $T$ displays $q$. \qed
\end{itemize}
\end{proof}


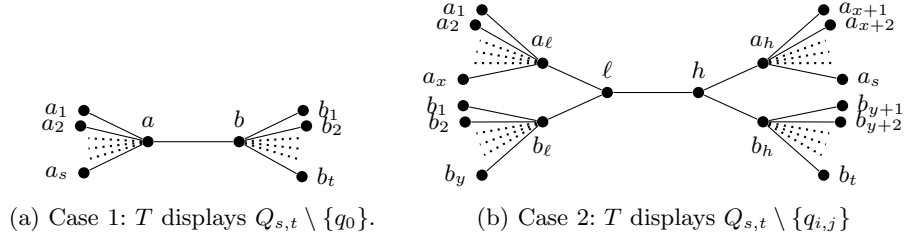
\begin{figure}[!t]
\centering
\subfloat[Case 1: $T$ displays $Q_{s,t} \setminus \{q_0\}$.]
{
	\centering
	\hspace{5pt}
	\begin{tikzpicture}[scale=1]
	\node (left) at (-.6,0) [circle,inner sep=1.5,fill=black,label=above:$a$] {};
	\node (right) at (.6,0) [circle,inner sep=1.5,fill=black,label=above:$b$] {};
	\node (a1) at (164:1.5) [circle,inner sep=1.5,fill=black,label=left:$a_1$] {};
	\node (a2) at (172:1.5) [circle,inner sep=1.5,fill=black,label=left:$a_2$] {};
	\node (left1) at (178:1.5) {};
	\node (left2) at (184:1.5) {};
	\node (left3) at (190:1.5) {};
	\node (as) at (196:1.5) [circle,inner sep=1.5,fill=black,label=left:$a_s$] {};
	\node (b1) at (16:1.5) [circle,inner sep=1.5,fill=black,label=right:$b_1$] {};
	\node (b2) at (8:1.5) [circle,inner sep=1.5,fill=black,label=right:$b_2$] {};
	\node (right1) at (2:1.5) {};
	\node (right2) at (356:1.5) {};
	\node (right3) at (350:1.5) {};
	\node (bt) at (342:1.5) [circle,inner sep=1.5,fill=black,label=right:$b_t$] {};
	\path[-] (left) edge (right);
	\path[-] (left) edge (a1);
	\path[-] (left) edge (a2);
	\path[dotted,thick] (left) edge (left1);
	\path[dotted,thick] (left) edge (left2);
	\path[dotted,thick] (left) edge (left3);
	\path[-] (left) edge (as);
	\path[-] (b1) edge (right);
	\path[-] (b2) edge (right);
	\path[-] (bt) edge (right);
	\path[dotted,thick] (right1) edge (right);
	\path[dotted,thick] (right2) edge (right);
	\path[dotted,thick] (right3) edge (right);
	\end{tikzpicture}
	\hspace{5pt}
}
\quad
\subfloat[Case 2: $T$ displays $Q_{s,t} \setminus \{q_{i,j}\}$]
{
	\centering
	\begin{tikzpicture}[scale=1]
	\node (low) at (-.6,0) [circle,inner sep=1.5,fill=black,label=above:$\ell$] {};
	\node (high) at (.6,0) [circle,inner sep=1.5,fill=black,label=above:$h$] {};
	\node (alow) at (165:1.5) [circle,inner sep=1.5,fill=black,label=above:$\alow$] {};
	\node (ahigh) at (15:1.5) [circle,inner sep=1.5,fill=black,label= above:$\ahigh$] {};
	\node (blow) at (195:1.5) [circle,inner sep=1.5,fill=black,label= below:$\blow$] {};
	\node (bhigh) at (345:1.5) [circle,inner sep=1.5,fill=black,label= below:$\bhigh$] {};
	\path[-] (low) edge (high);
	\path[-] (low) edge (alow);
	\path[-] (low) edge (blow);
	\path[-] (ahigh) edge (high);
	\path[-] (bhigh) edge (high);
	
	\node (a1) at (154:2.5) [circle,inner sep=1.5,fill=black,label=left:$a_1$] {};
	\node (a2) at (159:2.5) [circle,inner sep=1.5,fill=black,label=left:$a_2$] {};
	\node (alow1) at (163:2.5) {};
	\node (alow2) at (167:2.5) {};
	\node (alow3) at (171:2.5) {};
	\node (ax) at (176:2.5) [circle,inner sep=1.5,fill=black,label=left:$a_x$] {};
	\path[-] (alow) edge (a1);
	\path[-] (alow) edge (a2);
	\path[dotted,thick] (alow) edge (alow1);
	\path[dotted,thick] (alow) edge (alow2);
	\path[dotted,thick] (alow) edge (alow3);
	\path[-] (alow) edge (ax);
	
	\node (b1) at (184:2.5) [circle,inner sep=1.5,fill=black,label=left:$b_1$] {};
	\node (b2) at (189:2.5) [circle,inner sep=1.5,fill=black,label=left:$b_2$] {};
	\node (blow1) at (193:2.5) {};
	\node (blow2) at (197:2.5) {};
	\node (blow3) at (201:2.5) {};
	\node (by) at (206:2.5) [circle,inner sep=1.5,fill=black,label=left:$b_y$] {};
	\path[-] (blow) edge (b1);
	\path[-] (blow) edge (b2);
	\path[dotted,thick] (blow) edge (blow1);
	\path[dotted,thick] (blow) edge (blow2);
	\path[dotted,thick] (blow) edge (blow3);
	\path[-] (blow) edge (by);
	
	\node (ax1) at (26:2.5) [circle,inner sep=1.5,fill=black,label=right:$a_{x+1}$] {};
	\node (ax2) at (21:2.5) [circle,inner sep=1.5,fill=black,label=right:$a_{x+2}$] {};
	\node (ahigh1) at (17:2.5) {};
	\node (ahigh2) at (13:2.5) {};
	\node (ahigh3) at (9:2.5) {};
	\node (as) at (4:2.5) [circle,inner sep=1.5,fill=black,label=right:$a_s$] {};
	\path[-] (ahigh) edge (ax1);
	\path[-] (ahigh) edge (ax2);
	\path[dotted,thick] (ahigh) edge (ahigh1);
	\path[dotted,thick] (ahigh) edge (ahigh2);
	\path[dotted,thick] (ahigh) edge (ahigh3);
	\path[-] (ahigh) edge (as);
	
	\node (by1) at (356:2.5) [circle,inner sep=1.5,fill=black,label=right:$b_{y+1}$] {};
	\node (by2) at (351:2.5) [circle,inner sep=1.5,fill=black,label=right:$b_{y+2}$] {};
	\node (bhigh1) at (347:2.5) {};
	\node (bhigh2) at (343:2.5) {};
	\node (bhigh3) at (339:2.5) {};
	\node (bt) at (334:2.5) [circle,inner sep=1.5,fill=black,label=right:$b_t$] {};
	\path[-] (bhigh) edge (by1);
	\path[-] (bhigh) edge (by2);
	\path[dotted,thick] (bhigh) edge (bhigh1);
	\path[dotted,thick] (bhigh) edge (bhigh2);
	\path[dotted,thick] (bhigh) edge (bhigh3);
	\path[-] (bhigh) edge (bt);
	\end{tikzpicture}
}
\caption{Illustrating the proof of Lemma \ref{lemQSubsetCompatible}.
}
\label{figMissingQuartet}
\end{figure}

\begin{theorem}\label{thmQuartetLB}
For every integer $n \ge 4$, there exists a set $Q$ of quartets over $n$ taxa such that all of the following conditions hold.
\begin{enumerate}
	\item $Q$ is incompatible.
	\item Every proper subset of $Q$ is compatible.
	\item $|Q|=\lfloor\frac{n-2}{2}\rfloor\cdot\lceil\frac{n-2}{2}\rceil + 1$.
\end{enumerate}
\end{theorem}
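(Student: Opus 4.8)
The plan is to obtain the desired set $Q$ as a single instance of the family $Q_{s,t}$, choosing $s$ and $t$ so that the label set has exactly $n$ elements while the cardinality given by Observation~\ref{obsQSize} hits the target. Since $\L_{s,t}$ has $s+t$ labels, I would first impose $s+t=n$. Writing $p=s-1$ and $q=t-1$, Observation~\ref{obsQSize} gives $|Q_{s,t}|=pq+1$ with $p+q=n-2$, so reaching the stated cardinality amounts to achieving $pq=\lfloor\frac{n-2}{2}\rfloor\cdot\lceil\frac{n-2}{2}\rceil$. This is precisely the maximum of $pq$ over positive integers of fixed sum $n-2$, attained when $p$ and $q$ are as balanced as possible. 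Accordingly I set $s=\lfloor\frac{n-2}{2}\rfloor+1$ and $t=\lceil\frac{n-2}{2}\rceil+1$.

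Next I would check admissibility and dispatch conditions (1) and (3). For $n\ge 4$ we have $\lfloor\frac{n-2}{2}\rfloor\ge 1$, hence $s\ge 2$ and $t\ge 2$, and $s+t=(n-2)+2=n$, so $Q:=Q_{s,t}$ is indeed a set of quartets over a label set of size $n$. Condition (3) then follows from Observation~\ref{obsQSize} together with the identity $(s-1)(t-1)=\lfloor\frac{n-2}{2}\rfloor\cdot\lceil\frac{n-2}{2}\rceil$, and condition (1) is immediate from Lemma~\ref{lemQIncompatible}, which guarantees that $Q_{s,t}$ is incompatible whenever $s,t\ge 2$.

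It remains to establish condition (2). Lemma~\ref{lemQSubsetCompatible} asserts compatibility only for the sets $Q\setminus\{q\}$ obtained by deleting a \emph{single} quartet, so to handle an arbitrary proper subset $Q'\subsetneq Q$ I would invoke the elementary monotonicity of compatibility under deletion: if a tree displays every quartet of a set, it displays every quartet of any subset. Since $Q'$ omits at least one quartet $q$, we have $Q'\subseteq Q\setminus\{q\}$, and the tree exhibited in Lemma~\ref{lemQSubsetCompatible} witnessing the compatibility of $Q\setminus\{q\}$ also witnesses the compatibility of $Q'$. This settles all three conditions. There is no genuinely hard step: the construction and both of its structural properties are already supplied by Lemmas~\ref{lemQIncompatible} and~\ref{lemQSubsetCompatible}, and the only points requiring care are the balancing computation that pins $pq$ to the required product and the monotonicity remark that upgrades single-quartet deletions to all proper subsets.
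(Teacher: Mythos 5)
Your proposal is correct and follows essentially the same route as the paper: the choice $s=\lfloor\frac{n-2}{2}\rfloor+1$, $t=\lceil\frac{n-2}{2}\rceil+1$ coincides with the paper's $s=\lfloor\frac{n}{2}\rfloor$, $t=\lceil\frac{n}{2}\rceil$, and the three conditions are discharged by Observation~\ref{obsQSize} and Lemmas~\ref{lemQIncompatible} and~\ref{lemQSubsetCompatible} exactly as in the paper. Your explicit monotonicity remark upgrading single-quartet deletions to arbitrary proper subsets is a detail the paper leaves implicit, but it is the same argument.
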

\begin{proof}
By letting $s=\lfloor\frac{n}{2}\rfloor$ and $t=\lceil\frac{n}{2}\rceil$, it follows from Observation~\ref{obsQSize} and Lemmas~\ref{lemQIncompatible} and~\ref{lemQSubsetCompatible} that $Q_{s,t}$ is of size $(\lfloor\frac{n}{2}\rfloor - 1)\cdot(\lceil\frac{n}{2}\rceil - 1) + 1 = \lfloor\frac{n-2}{2}\rfloor\cdot\lceil\frac{n-2}{2}\rceil + 1$ and is incompatible, but every proper subset of $Q_{s,t}$ is compatible.
\qed
\end{proof}

The following theorem allows us to use our result on quartet compatibility to establish a lower bound on $f(r)$.
\begin{theorem}\label{thmLBByQuartets} Let $Q$ be a set of incompatible quartets over $n$ labels such that every proper subset of $Q$ is compatible, and let $r=n-2$. Then, there exists a set $C$ of $|Q|$ $r$-state characters such that $C$ is incompatible, but every proper subset of $C$ is compatible.
\end{theorem}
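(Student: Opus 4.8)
The plan is to take $C = C_Q$, the set of $r$-state characters corresponding to $Q$, and to verify the three required properties directly from Lemma~\ref{lemQtoC} together with a single elementary observation about singleton states. First I would confirm the cardinality: the map $q \mapsto \chi_q$ is injective, since from $\chi_q$ one recovers $q = ab|cd$ as the unique quartet whose two non-singleton (doubleton) states are $\{a,b\}$ and $\{c,d\}$. Hence $|C_Q| = |Q|$, and each $\chi_q$ is genuinely an $r$-state character, having the two doubleton states together with the $n-4$ singletons on $\L(Q)\setminus\{a,b,c,d\}$, for a total of $r = n-2$ parts. Incompatibility of $C$ is then immediate: since $Q$ is incompatible, Lemma~\ref{lemQtoC} gives that $C_Q = C$ is incompatible.

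The substance lies in the third property, that every proper subset of $C$ is compatible. Because compatibility is inherited by subsets (a tree displaying a set of characters displays any subset), it suffices to show that each maximal proper subset $C \setminus \{\chi_q\}$ is compatible. The idea is to relate this set to $C_{Q\setminus\{q\}}$. By hypothesis $Q \setminus \{q\}$ is a proper subset of $Q$ and hence compatible, so by Lemma~\ref{lemQtoC} the set $C_{Q\setminus\{q\}}$ is compatible over the label set $\L(Q\setminus\{q\})$.

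The one gap to close is bookkeeping about label sets: $C\setminus\{\chi_q\}$ is a set of characters over the possibly larger universe $\L(Q)$, whereas $C_{Q\setminus\{q\}}$ lives over $\L(Q\setminus\{q\})$. For each $q' \ne q$, the character $\chi_{q'}\in C$ agrees with its counterpart in $C_{Q\setminus\{q\}}$ except that every label in $\L(Q)\setminus\L(Q\setminus\{q\})$ (a label occurring only in $q$) appears as an \emph{additional singleton state}. Thus $C\setminus\{\chi_q\}$ is obtained from $C_{Q\setminus\{q\}}$ by adjoining new singleton states to every character, and the crux of the argument is that this operation preserves compatibility.

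That preservation claim is the only real obstacle, and I would settle it directly. Given a tree $T'$ displaying $C_{Q\setminus\{q\}}$, extend it to a tree $T$ over $\L(Q)$ by attaching each extra label as a new pendant leaf. The subtree $T_s(\chi_{q'})$ of a singleton state $s=\{\ell\}$ is the single leaf vertex $\ell$, which lies on no path between two other leaves and is therefore vertex-disjoint from every other state's subtree; meanwhile the two doubleton-state subtrees remain vertex-disjoint as in $T'$. Hence each $\chi_{q'}$ stays convex on $T$, so $T$ displays $C\setminus\{\chi_q\}$, which is therefore compatible. Combining the three verified properties yields the theorem, with $C = C_Q$ as the desired set of $|Q|$ incompatible $r$-state characters all of whose proper subsets are compatible.
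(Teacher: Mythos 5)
Your proposal is correct and follows essentially the same route as the paper: take $C = C_Q$, get $|C_Q|=|Q|$ from injectivity of $q \mapsto \chi_q$, derive incompatibility of $C_Q$ and compatibility of each $C_{Q'}$ for proper $Q' \subset Q$ from Lemma~\ref{lemQtoC}. The only difference is that you explicitly handle the label-set discrepancy between $\L(Q)$ and $\L(Q\setminus\{q\})$ via the pendant-leaf/singleton-state argument, a detail the paper's proof silently elides; this is a welcome tightening rather than a different approach.
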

\begin{proof}
We claim that $C_Q$ is such a set of incompatible $r$-state characters. Since for two quartets $q_1,q_2 \in Q$, $\chi_{q_1}\ne\chi_{q_2}$, it follows that $|C_Q|=|Q|$. Since $Q$ is incompatible, it follows by Lemma \ref{lemQtoC} that $C_Q$ is incompatible. Let $C'$ be any proper subset of $C$. Then, there is a proper subset $Q'$ of $Q$ such that $C' = C_{Q'}$. Then, since $Q'$ is compatible, it follows by Lemma \ref{lemQtoC} that $C'$ is compatible.
\qed 
\end{proof}

Theorem \ref{thmQuartetLB} together with Theorem \ref{thmLBByQuartets} gives the following theorem.

\begin{theorem}\label{lemCharacterLB} 
For every integer $r \ge 2$, there exists a set $C$ of $r$-state characters such that all of the following hold.
\begin{enumerate}
	\item $C$ is incompatible.
	\item Every proper subset of $C$ is compatible.
	\item $|C| = \lfloor\frac{r}{2}\rfloor\cdot\lceil\frac{r}{2}\rceil + 1$.
\end{enumerate}
\end{theorem}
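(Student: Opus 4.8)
The plan is to obtain $C$ by composing the two preceding theorems, treating the quartet construction of Theorem~\ref{thmQuartetLB} as a black box and transporting it to characters via Theorem~\ref{thmLBByQuartets}. Given an integer $r \ge 2$, I would first set $n = r + 2$, so that $n \ge 4$ and Theorem~\ref{thmQuartetLB} applies. This yields a set $Q$ of quartets over $n$ labels that is incompatible, has every proper subset compatible, and satisfies $|Q| = \lfloor\frac{n-2}{2}\rfloor\cdot\lceil\frac{n-2}{2}\rceil + 1$.

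Next I would feed this $Q$ into Theorem~\ref{thmLBByQuartets} with $r = n - 2$. Since $Q$ is incompatible over $n$ labels and every proper subset of $Q$ is compatible, the hypotheses of Theorem~\ref{thmLBByQuartets} are met, and it produces a set $C = C_Q$ of $|Q|$ many $r$-state characters that is incompatible while every proper subset of $C$ is compatible. This immediately establishes conditions (1) and (2).

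For condition (3), it remains only to verify the cardinality. Substituting $r = n - 2$ into the size supplied by Theorem~\ref{thmQuartetLB} gives $|C| = |Q| = \lfloor\frac{n-2}{2}\rfloor\cdot\lceil\frac{n-2}{2}\rceil + 1 = \lfloor\frac{r}{2}\rfloor\cdot\lceil\frac{r}{2}\rceil + 1$, as required. There is essentially no hard step in this particular argument: all of the combinatorial content already lives in the incompatibility proof of Lemma~\ref{lemQIncompatible}, the explicit witnessing trees of Lemma~\ref{lemQSubsetCompatible}, and the label-count bijection between $Q$ and $C_Q$ underlying Theorem~\ref{thmLBByQuartets}. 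The only point demanding any attention is confirming that the index shift $n \mapsto n-2$ aligns the quartet count with the claimed character count, which is precisely the trivial identity just noted.
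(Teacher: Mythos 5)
Your proposal is correct and follows exactly the paper's own route: set $n = r+2$, invoke Theorem~\ref{thmQuartetLB} to obtain the quartet set $Q_{\lfloor\frac{r+2}{2}\rfloor,\lceil\frac{r+2}{2}\rceil}$, and transport it to $r$-state characters via Theorem~\ref{thmLBByQuartets}, with the cardinality identity $\lfloor\frac{n-2}{2}\rfloor\cdot\lceil\frac{n-2}{2}\rceil + 1 = \lfloor\frac{r}{2}\rfloor\cdot\lceil\frac{r}{2}\rceil + 1$ handling condition (3). No gaps.
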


\begin{proof}
By Theorem \ref{thmQuartetLB} and Observation \ref{obsQSize}, there exists a set $Q$ of $\lfloor\frac{r}{2}\rfloor\cdot\lceil\frac{r}{2}\rceil + 1$ quartets over $r+2$ labels that that are incompatible, but every proper subset is compatible, namely $Q_{\lfloor\frac{r+2}{2}\rfloor,\lceil\frac{r+2}{2}\rceil}$. The theorem follows from Theorem \ref{thmLBByQuartets}.
\qed
\end{proof}

The quadratic lower bound on $f(r)$ follows from Theorem \ref{lemCharacterLB}.

\begin{corollary}\label{corFrLB}
$f(r) \ge \lfloor\frac{r}{2}\rfloor\cdot\lceil\frac{r}{2}\rceil + 1$.
\end{corollary}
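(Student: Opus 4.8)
The plan is to derive the bound directly from Theorem~\ref{lemCharacterLB} by unpacking the defining property of $f(r)$ stated in Conjecture~\ref{conjFR}: for any set $C$ of $r$-state characters, $C$ is compatible if and only if every subset of $f(r)$ characters of $C$ is compatible. Accordingly, I would begin by setting $m = \lfloor\frac{r}{2}\rfloor\cdot\lceil\frac{r}{2}\rceil + 1$ and invoking Theorem~\ref{lemCharacterLB} to fix a set $C$ of $r$-state characters that is incompatible, has $|C| = m$, and has the property that every proper subset of $C$ is compatible. This set $C$ is exactly the ``minimal obstruction'' whose existence the earlier development was built to guarantee.

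The core of the argument is a short proof by contradiction. Suppose $f(r) < m$, so that $f(r) \le m - 1 = |C| - 1$. Then any subset of $C$ consisting of exactly $f(r)$ characters has strictly fewer than $|C|$ elements, and is therefore a \emph{proper} subset of $C$; by the second property of $C$, every such subset is compatible. Applying the ``if'' direction of the biconditional defining $f(r)$, it would follow that $C$ itself is compatible, contradicting the incompatibility of $C$ furnished by Theorem~\ref{lemCharacterLB}. Hence $f(r) \ge m = \lfloor\frac{r}{2}\rfloor\cdot\lceil\frac{r}{2}\rceil + 1$, which is the claim.

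The single point requiring care is to apply the defining biconditional in the correct direction and to observe that the hypothesis ``every subset of $f(r)$ characters is compatible'' is satisfied \emph{precisely because} $C$ is a minimal incompatible set: all subsets of size $f(r) < |C|$ are proper, hence compatible. There is no genuine obstacle here, since all of the substantive work has already been done in establishing Theorem~\ref{lemCharacterLB} (and, upstream of it, Theorem~\ref{thmQuartetLB} and Theorem~\ref{thmLBByQuartets}); the corollary is simply the observation that a minimal-obstruction construction of size $m$ forces the compatibility-testing threshold $f(r)$ to be at least $m$.
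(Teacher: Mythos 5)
Your argument is correct and is precisely the reasoning the paper leaves implicit when it states that the corollary ``follows from Theorem~\ref{lemCharacterLB}'': the minimal incompatible set $C$ of size $m=\lfloor\frac{r}{2}\rfloor\cdot\lceil\frac{r}{2}\rceil+1$ forces any $f(r)$ satisfying the biconditional of Conjecture~\ref{conjFR} to be at least $m$, since otherwise all size-$f(r)$ subsets would be proper and hence compatible, wrongly certifying $C$ as compatible. No issues.
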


\section{Compatibility of Triplets}


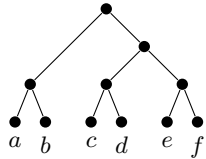
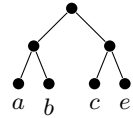
\begin{figure}[!b]
\centering
\subfloat[A tree $T$ witnessing that the triplets $ab|c$, $de|b$, $ef|c$, and $ec|b$ are compatible]{
\centering
\hspace{50pt}
\begin{tikzpicture}[scale=1]
\node(root) at (0,0) [circle,inner sep=1.5,fill=black] {};
\node (left) at (-1,-1) [circle,inner sep=1.5,fill=black] {};
\node (right) at (.5,-.5) [circle,inner sep=1.5,fill=black] {};
\node (v2) at (0,-1) [circle,inner sep=1.5,fill=black] {};
\node (v3) at (1,-1) [circle,inner sep=1.5,fill=black] {};
\node (a) at (-1.2,-1.5) [circle,inner sep=1.5,fill=black,label=below:$a$] {};
\node (b) at (-.8,-1.5) [circle,inner sep=1.5,fill=black,label=below:$b$] {};
\node (c) at (-.2,-1.5) [circle,inner sep=1.5,fill=black,label=below:$c$] {};
\node (d) at (.2,-1.5) [circle,inner sep=1.5,fill=black,label=below:$d$] {};
\node (e) at (.8,-1.5) [circle,inner sep=1.5,fill=black,label=below:$e$] {};
\node (f) at (1.2,-1.5) [circle,inner sep=1.5,fill=black,label=below:$f$] {};
\path[-] (root) edge (left);
\path[-] (root) edge (right);
\path[-] (right) edge (v2);
\path[-] (right) edge (v3);
\path[-] (left) edge (a);
\path[-] (left) edge (b);
\path[-] (v2) edge (c);
\path[-] (v2) edge (d);
\path[-] (v3) edge (e);
\path[-] (v3) edge (f);
\end{tikzpicture}
\hspace{50pt}
}
\quad
\subfloat[$T|\{a,b,c,e\}$]{
\centering
\hspace{30pt}
\begin{tikzpicture}[scale=1]
\node(root) at (0,0) [circle,inner sep=1.5,fill=black] {};
\node (left) at (-.5,-.5) [circle,inner sep=1.5,fill=black] {};
\node (right) at (.5,-.5) [circle,inner sep=1.5,fill=black] {};
\node (a) at (-.7,-1) [circle,inner sep=1.5,fill=black,label=below:$a$] {};
\node (b) at (-.3,-1) [circle,inner sep=1.5,fill=black,label=below:$b$] {};
\node (c) at (.3,-1) [circle,inner sep=1.5,fill=black,label=below:$c$] {};
\node (e) at (.7,-1) [circle,inner sep=1.5,fill=black,label=below:$e$] {};
\path[-] (root) edge (left);
\path[-] (root) edge (right);
\path[-] (left) edge (a);
\path[-] (left) edge (b);
\path[-] (right) edge (c);
\path[-] (right) edge (e);
\end{tikzpicture}
\hspace{30pt}
}
\caption{Rooted Phylogenetic Trees}
\label{figRootedTree}
\end{figure}

A {\em rooted phylogenetic tree} (or just {\em rooted tree}) is  a tree whose leaves are in one to one correspondence with a label set $\L(T)$, has a distinguished vertex called the {\em root}, and no vertex other than root has degree two. See Fig. \ref{figRootedTree}(a) for an example. A rooted tree is \emph{binary} if the root vertex has degree two, and every other internal (non-leaf) vertex has degree three.
A {\em triplet} is a rooted binary tree with exactly three leaves. A triplet with label set $\{a, b, c\}$ is denoted $ab|c$ if the  path between the leaves labeled $a$ and $b$ does include the root vertex. For a tree $T$, and a label set $L \subseteq L(T)$, let $T'$ be the minimal subtree of $T$ connecting all the leaves with labels in $L$. The \emph{restriction} of $T$ to $L$, denoted by $T|L$, is the rooted tree obtained from $T'$ by distinguishing the vertex closest to the root of $T$ as the root of $T'$, and suppressing every vertex other than the root having degree two. A rooted tree $T$ \emph{displays} another rooted tree $T'$ if $T'$ can be obtained from $T|\L(T')$ by contracting edges. A rooted tree $T$ displays a collection of rooted trees $\T$ if $T$ displays every tree in $\T$. If such a tree $T$ exists, then we say that $\T$ is compatible; otherwise, we say that $\T$ is incompatible. Given a collection of rooted trees $\T$, it can be determined in polynomial time if $\T$ is compatible~\cite{Aho1981a}.

The following theorems follow from the connection between collections of unrooted trees with at least one common label across all the trees, and collections of rooted trees \cite{Steel1992a}.

\begin{theorem}\label{thm:quartets_to_triplets}
Let $Q$ be a collection of quartets where every quartet in $Q$ shares a common label $\ell$. Let $R$ be the set of triplets such that there exists a triplet $ab|c$ in $R$ if and only if there exists a quartet $ab|c\ell$ in $Q$. Then, $Q$ is compatible if and only if $R$ is compatible.
\end{theorem}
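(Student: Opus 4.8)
The plan is to reconstruct the classical bijection between unrooted phylogenetic trees carrying a fixed leaf $\ell$ and rooted phylogenetic trees on the remaining labels, and to verify that it carries quartets through $\ell$ to triplets. Write $L = \L(Q) \setminus \{\ell\}$, so that $\L(R) = L$. Define a map $\phi$ that sends an unrooted tree $T$ with $\L(T) = \L(Q)$ to the rooted tree $\phi(T)$ on $L$ obtained by \emph{rooting at $\ell$}: delete the leaf $\ell$ and its incident edge, and designate its former neighbour as the root. This $\phi$ is a bijection; its inverse attaches a fresh leaf $\ell$ to the root of a rooted tree on $L$ and then forgets the root marking. In both directions, degree-two vertices are treated exactly as in the restriction operations defined in the Preliminaries, so $\phi$ and $\phi^{-1}$ produce legal trees.

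The heart of the argument is the following compatibility of $\phi$ with the display relation: for any distinct $a,b,c \in L$, a tree $T$ displays $ab|c\ell$ if and only if $\phi(T)$ displays $ab|c$. Since quartets and triplets are binary, $T$ displays $ab|c\ell$ exactly when $T|\{a,b,c,\ell\}$ equals the quartet $ab|c\ell$, and $\phi(T)$ displays $ab|c$ exactly when $\phi(T)|\{a,b,c\}$ equals the triplet $ab|c$. A direct inspection shows that rooting the four-leaf tree $ab|c\ell$ at $\ell$ produces precisely the rooted triplet $ab|c$: the internal edge separating $\{a,b\}$ from $\{c,\ell\}$ becomes, once $\ell$ is pulled down to the root, the edge beneath which $a$ and $b$ cluster below $c$. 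The claim therefore reduces to showing that rooting at $\ell$ commutes with restriction, that is, $\phi(T)|\{a,b,c\} = \phi\big(T|\{a,b,c,\ell\}\big)$. I would verify this by checking that the minimal subtree connecting $\{a,b,c,\ell\}$ in $T$ and the minimal subtree connecting $\{a,b,c\}$ in $\phi(T)$ trace out the same edges, with the neighbour of $\ell$ serving as the distinguished root on both sides.

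Granting this lemma, the theorem follows by bookkeeping. If $Q$ is compatible, take a witness $T$; restricting to $\L(Q)$ we may assume $\L(T) = \L(Q)$. Each triplet $ab|c \in R$ arises from a quartet $ab|c\ell \in Q$ displayed by $T$, so by the lemma $\phi(T)$ displays $ab|c$; hence $\phi(T)$ witnesses that $R$ is compatible. Conversely, if $R$ is compatible with witness $T'$ on $L$, then $\phi^{-1}(T')$ displays $ab|c\ell$ for every triplet $ab|c \in R$, and since every quartet of $Q$ has this form (any quartet on $\{a,b,c,\ell\}$ can be written with $\ell$ paired with a single leaf), $\phi^{-1}(T')$ displays all of $Q$. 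The one step demanding genuine care is the commuting-with-restriction verification in the lemma; the remainder is routine inspection of four-leaf and three-leaf restrictions.
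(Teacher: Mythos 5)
Your proof is correct: the rooting-at-$\ell$ bijection $\phi$, the observation that it carries the quartet $ab|c\ell$ to the triplet $ab|c$, and the verification that rooting commutes with restriction constitute exactly the standard argument. The paper itself supplies no proof of this theorem, deferring to the correspondence between unrooted trees sharing a common label and rooted trees from Steel (1992); your reconstruction is precisely that correspondence, and the only points needing the care you already flag are the degenerate cases (star restrictions and suppression of the degree-two vertex left behind when $\ell$ is removed or reattached), which do not affect the conclusion.
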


Let $R$ be a collection of triplets. For a subset $S \subseteq L(R)$, we define the graph $[R, S]$ as the graph having a vertex 
for each label in $S$, and an edge $\{a, b\}$ 
if and only if  $ab|c \in R$ for some $c \in S$. The following theorem is from~\cite{BryantSteel95}.

\begin{theorem}\label{thm:onetree}
A collection $R$ of rooted triplets is compatible if and only if $[R, S]$ is not connected for every $S \subseteq L(R)$ with $|S| \geq 3$.
\end{theorem}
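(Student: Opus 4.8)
The plan is to prove both implications, treating the forward direction (compatibility $\Rightarrow$ disconnectedness) as the easy one and the converse as the substantive part, which amounts to establishing the correctness of the classical top-down \textsc{Build} construction.

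For the forward direction, I would assume $R$ is displayed by a rooted tree $T$ and fix any $S \subseteq L(R)$ with $|S| \geq 3$. Consider the restriction $T|S$ and let $\rho$ be its root, i.e.\ the least common ancestor of $S$ in $T$. Since $\rho$ is the least common ancestor of at least three leaves, it has at least two child subtrees each containing a label of $S$, and these child subtrees partition $S$ into at least two nonempty blocks. The key claim is that every edge of $[R,S]$ lies within a single block: if $ab|c \in R$ with $c \in S$, then because $T$ displays $ab|c$ the least common ancestor of $a$ and $b$ is a strict descendant of the least common ancestor of $\{a,b,c\}$, which is itself a (non-strict) descendant of $\rho$; hence $a$ and $b$ lie in the same child subtree of $\rho$. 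Since every edge stays inside a block, $[R,S]$ is disconnected, as required.

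For the converse, I would argue by induction on $|L(R)|$ that if $[R,S]$ is disconnected for every $S \subseteq L(R)$ with $|S| \geq 3$, then $R$ is compatible, by explicitly building a tree. The base cases $|L(R)| \leq 2$ are trivial (a single leaf or a cherry). For $|L(R)| \geq 3$, apply the hypothesis with $S = L(R)$ to conclude that $[R, L(R)]$ is disconnected; let $S_1, \ldots, S_k$ with $k \geq 2$ be its connected components. For each block let $R_i$ be the set of triplets of $R$ whose three labels all lie in $S_i$. I would first check that the inductive hypothesis is inherited: for any $S' \subseteq S_i$ the graphs $[R_i, S']$ and $[R, S']$ coincide (a triplet on three labels of $S' \subseteq S_i$ belongs to $R_i$ exactly when it belongs to $R$), so $[R_i, S']$ is disconnected whenever $|S'| \geq 3$. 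By induction each pair $(R_i, S_i)$ has a displaying tree $T_i$; form $T$ by attaching the $T_i$ as the child subtrees of a new root. It remains to verify that $T$ displays every $ab|c \in R$: the edge $ab$ lies in $[R, L(R)]$, so $a$ and $b$ share a component $S_i$; if $c \in S_i$ as well then $ab|c \in R_i$ and $T_i$, hence $T$, displays it by induction, while if $c \notin S_i$ then $a,b$ lie in $T_i$ and $c$ in a different child subtree, so the least common ancestor of $a$ and $b$ is strictly below the root, which is the least common ancestor of $\{a,b,c\}$, and again $ab|c$ is displayed.

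The main obstacle is the converse direction, and within it the two structural points that make \textsc{Build} work rather than the mechanical recursion: verifying that the disconnectedness condition is correctly inherited by each subproblem $(R_i, S_i)$, so that the recursion never stalls and always splits into at least two blocks, and checking that the attach-at-a-new-root step yields a tree that displays every triplet of $R$. Both reduce to careful bookkeeping about least common ancestors across the top-level partition, but no genuine estimate is involved; the content is entirely combinatorial.
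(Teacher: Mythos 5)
The paper does not prove this statement at all: it is quoted as a known result and attributed to Bryant and Steel (1995), so there is no internal proof to compare against. Your argument is the standard one and is essentially correct. The forward direction via the least common ancestor $\rho$ of $S$ is right: since labels occur only at leaves, $\rho$ has at least two child subtrees meeting $S$, and the observation that $ab|c$ being displayed forces $\mathrm{lca}(a,b)$ strictly below $\mathrm{lca}(a,b,c)$, hence $a$ and $b$ into one block, is exactly the needed point. The converse is the correctness proof of the \textsc{Build} recursion, and your two ``structural points'' --- that $[R_i,S']=[R,S']$ for $S'\subseteq S_i$ (so the hypothesis is inherited) and that the attach-at-a-new-root step displays both the intra-component and cross-component triplets --- are the right ones and are argued correctly. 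The only loose thread is bookkeeping: your induction is stated on $|L(R)|$, but the recursive call is on the pair $(R_i,S_i)$ where $S_i$ may strictly contain $L(R_i)$ (some labels of $S_i$ may appear in no triplet of $R_i$), so the tree produced by the inductive hypothesis has leaf set $L(R_i)$ rather than $S_i$. You should either run the induction on pairs $(R,L)$ with $L(R)\subseteq L$ (noting that isolated vertices make $[R,S]$ trivially disconnected for any $S$ containing them) or attach the leftover labels of $S_i\setminus L(R_i)$ as extra children of the root of $T_i$; either fix is one line and does not affect the substance.
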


\begin{corollary}\label{cor:disconnected_aho}
Let $R$ be a set of rooted triplets such that $R$ is incompatible but every proper subset of $R$ is compatible. Then, $[R, L(R)]$ is connected.
\end{corollary}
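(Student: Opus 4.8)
The plan is to argue by contradiction, using Theorem~\ref{thm:onetree} as the only nontrivial ingredient. Suppose $R$ is incompatible and every proper subset of $R$ is compatible, yet $[R, L(R)]$ is \emph{dis}connected; write $L = L(R)$ and let $L_1, \ldots, L_k$ (with $k \ge 2$) be its connected components. By the contrapositive of Theorem~\ref{thm:onetree}, the incompatibility of $R$ supplies a witness set $S \subseteq L$ with $|S| \ge 3$ such that $[R, S]$ is connected. The heart of the argument is to relocate this witness inside a single component and then transfer the contradiction to a proper subset of $R$.

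First I would observe that every edge of $[R, S]$ lies within one component of $[R,L]$: an edge $\{a,b\}$ of $[R,S]$ arises from a triplet $ab|c \in R$ (with $c \in S$), and since $\{a,b\}$ is then also an edge of $[R, L]$, the labels $a$ and $b$ belong to the same component $L_i$. Consequently, if $S$ met two distinct components, $[R,S]$ would split along that partition and be disconnected, contradicting the choice of $S$. Hence $S \subseteq L_i$ for a single index $i$.

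Next I would show that restricting to this component yields a proper subset of $R$ whose triplet graph on $S$ is unchanged. Let $R|L_i$ denote the set of triplets of $R$ all of whose labels lie in $L_i$. Because $k \ge 2$ forces $L_i \subsetneq L$, we cannot have $R|L_i = R$ (that would put every label of $R$ inside $L_i$), so $R|L_i$ is a proper subset of $R$ and is therefore compatible by hypothesis. Since $[R,S]$ is connected on $|S|\ge 3$ vertices, every label of $S$ is an endpoint of some edge, i.e.\ occurs in a triplet $ab|c$ with $a,b,c \in S \subseteq L_i$; such a triplet lies in $R|L_i$, so $S \subseteq L(R|L_i)$ and moreover $[R,S] = [R|L_i, S]$ as graphs on vertex set $S$ (the defining triplets use only labels of $S \subseteq L_i$). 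Applying Theorem~\ref{thm:onetree} to the compatible set $R|L_i$ with the admissible set $S$ forces $[R|L_i, S]$ to be disconnected, contradicting that $[R, S] = [R|L_i, S]$ is connected.

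The only real obstacle is the bookkeeping in the middle steps: one must be careful that the witness $S$ does not merely intersect a component but is fully contained in it, and that passing from $[R,S]$ to $[R|L_i, S]$ neither gains nor loses edges. Everything else is a direct application of Theorem~\ref{thm:onetree} together with the minimality hypothesis. An alternative, more constructive route would build an explicit rooted tree by recursing on the components $L_1,\dots,L_k$ (each $R|L_i$ being compatible) and hanging the resulting subtrees under a common root; but the graph-theoretic argument above is shorter and avoids verifying the display condition tree-by-tree.
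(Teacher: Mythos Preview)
Your argument is correct. The paper states this as a bare corollary of Theorem~\ref{thm:onetree} with no proof, so there is nothing to compare against; what you have written is precisely the natural way to fill in the details. The only step that needs any care is the one you already flagged: verifying that the witness set $S$ sits inside a single component $L_i$ and that $[R,S] = [R|L_i,S]$, which you handle correctly by observing that every edge of $[R,S]$ comes from a triplet $ab|c$ with $a,b,c \in S$, hence entirely within $L_i$. Your remark that $R|L_i \subsetneq R$ because $L_i \subsetneq L(R)$ is also sound. The alternative recursive tree-construction you mention at the end is indeed viable and closer in spirit to the Aho--Sagiv--Szymanski--Ullman algorithm, but your graph-theoretic contradiction is cleaner here.
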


We now contrast our result on quartet compatibility with a result on triplets. 
\vspace{-2.1em}\\
\begin{theorem}
For every $n \ge 3$, if $R$ is an incompatible set of triplets over $n$ labels, and $|R| > n-1$, then some proper subset of $R$ is incompatible. 
\end{theorem}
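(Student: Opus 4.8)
The plan is to prove the contrapositive: assuming $R$ is incompatible while every proper subset of $R$ is compatible (so that $R$ is a \emph{minimal} incompatible set), I will show $|R| = |L(R)| - 1$, which since $|L(R)| \le n$ gives $|R| \le n-1$ and hence the statement. Throughout write $G := [R, L(R)]$, and note $|L(R)| \ge 3$ because an incompatible set of triplets must use at least three labels. By Corollary \ref{cor:disconnected_aho}, $G$ is connected. Since each triplet $ab|c$ contributes exactly the single edge $\{a,b\}$ to $[R,S]$ whenever $c \in S$, and $S = L(R)$ contains every label appearing in $R$, the edge set of $G$ is precisely $\{\{a,b\} : ab|c \in R\}$; in particular $|E(G)| \le |R|$.

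The first key step is to show that the assignment $ab|c \mapsto \{a,b\}$ is injective on $R$, so that $|R| = |E(G)|$. Fix $\rho = ab|c \in R$. Since $R \setminus \{\rho\}$ is a proper subset, it is compatible, so Theorem \ref{thm:onetree} (applied with $S = L(R)$, which is permissible as $|L(R)| \ge 3$) tells us that $[R \setminus \{\rho\}, L(R)]$ is disconnected. If some other triplet $\sigma \in R$ also produced the edge $\{a,b\}$, then $[R \setminus \{\rho\}, L(R)]$ and $G$ would have the same edge set, so the former would be connected, a contradiction. Hence $\{a,b\}$ is produced by $\rho$ alone, the map is injective, and $|R| = |E(G)|$.

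The second key step upgrades this to show $G$ is a tree. By the injectivity just established, for each $\rho = ab|c \in R$ the graph $[R \setminus \{\rho\}, L(R)]$ is obtained from the connected graph $G$ by deleting exactly the edge $\{a,b\}$, and we have just seen that this deletion disconnects the graph; thus $\{a,b\}$ is a bridge of $G$. As $\rho$ ranges over $R$, its injective image $\{a,b\}$ ranges over all of $E(G)$, so \emph{every} edge of $G$ is a bridge. A connected graph in which every edge is a bridge is acyclic, hence a tree, and a tree on $|L(R)|$ vertices has exactly $|L(R)| - 1$ edges. Combining with $|R| = |E(G)|$ yields $|R| = |L(R)| - 1 \le n - 1$, completing the contrapositive.

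I expect the main obstacle to be the careful bookkeeping in the first step: one must verify precisely how the edge set of $[R, S]$ changes when a single triplet is removed, in particular that deleting $\rho = ab|c$ removes the edge $\{a,b\}$ exactly when no second triplet of $R$ also yields $\{a,b\}$, since it is this exact correspondence that simultaneously forces injectivity and the bridge property. By contrast, the graph-theoretic fact that ``connected and every edge a bridge implies a tree'' is routine, as is the reduction to the minimal (contrapositive) case.
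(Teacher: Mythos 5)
Your proof is correct, and it takes a genuinely different route from the paper's, even though both arguments live in the same graph $[R,L(R)]$ and invoke Theorem~\ref{thm:onetree} and Corollary~\ref{cor:disconnected_aho}. The paper argues directly by contradiction: assuming $|R|>n-1$ it counts $n$ vertices against at least $n$ edges, extracts a cycle, deletes one cycle edge (i.e., drops the triplet producing it), and observes the graph stays connected, contradicting compatibility of the proper subset. You instead characterize the structure of a minimal incompatible set: minimality forces the map $ab|c\mapsto\{a,b\}$ to be injective, and then forces every edge to be a bridge, so $[R,L(R)]$ is a tree and $|R|=|L(R)|-1$ exactly. Your version buys two things. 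First, it is more careful on a point the paper glosses over: the paper's claim of ``at least $n$ edges'' tacitly assumes distinct triplets yield distinct edges, which is false in general (e.g., $ab|c$ and $ab|d$ give the same edge $\{a,b\}$) and is only true here \emph{because} of minimality --- precisely the injectivity you prove. (The paper's argument is still patchable: if two triplets share an edge, deleting one leaves the graph unchanged and hence connected, giving the contradiction anyway.) Second, your argument yields the sharper structural fact $|R|=|L(R)|-1$, which matches the tightness construction of Corollary~\ref{corTriplets}. One small point you should make explicit: Theorem~\ref{thm:onetree} applied to $R'=R\setminus\{\rho\}$ only quantifies over $S\subseteq L(R')$, and $L(R)$ need not be contained in $L(R')$ if $\rho$ carries a label unique to it. This is harmless --- in that case $[R',L(R)]$ has an isolated vertex among at least three vertices and is disconnected anyway --- but as written your parenthetical ``which is permissible as $|L(R)|\ge 3$'' does not quite justify the application.
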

\begin{proof}
For sake of contradiction, let $R$ be a set of triplets such that $R$ is incompatible, every proper subset of $R$ is compatible, $|L(R)| = n$, and $|R| > n-1$. The graph $[R, L(R)]$ will contain $n$ vertices and at least $n$ edges. Since each triplet in $R$ is distinct, there will be a cycle $C$ of length at least three in $[R, L(R)]$. Since $R$ is incompatible but every proper subset of $R$ is compatible, by Corollary~\ref{cor:disconnected_aho}, $[R, L(R)]$ is connected. 

Consider any edge $e$ in the cycle $C$. Let $t$ be the triplet that contributed edge $e$ in $[R, L(R)]$. Let $R' = R \setminus t$. Since the graph $[R, L(R)] - e$ is connected, $[R', L(R')]$ is connected. By theorem~\ref{thm:onetree}, $R'$ is incompatible. But $R' \subset R$, contradicting that 
every proper subset of 
$R$ is compatible.
\qed
\end{proof}

To show the bound is tight,\,we first prove a more restricted form of Theorem\,\ref{thmQuartetLB}.

\begin{theorem}\label{thmQuartetSharedLB}
For every $n \ge 4$, there exists a set of quartets $Q$ with $|L(Q)|=n$, and a label $\ell \in L(Q)$, such that all of the following hold.
\begin{enumerate}
\item Every $q \in Q$ contains a leaf labeled by $\ell$.
\item $Q$ is incompatible.
\item Every proper subset of $Q$ is compatible.
\item $|Q| = n-2$.
\end{enumerate}
\end{theorem}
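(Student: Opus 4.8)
The goal is a set $Q$ of $n-2$ quartets, all sharing a common label $\ell$, that is incompatible with every proper subset compatible. The natural plan is to \emph{specialize} the family $Q_{s,t}$ from Theorem~\ref{thmQuartetLB} to the case where one of the two sides is as small as possible, forcing a common label, and then translate through the quartet-to-triplet correspondence of Theorem~\ref{thm:quartets_to_triplets}. The size constraint $|Q| = n-2$ is linear rather than quadratic, so I cannot simply invoke the general $Q_{s,t}$; I must pick $s,t$ so that $(s-1)(t-1)+1 = n-2$. Taking $t=2$ gives $|Q_{s,2}| = (s-1)\cdot 1 + 1 = s$, and with $s+t = n$ this means $s = n-2$, which matches exactly. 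So the plan is to set $s = n-2$, $t = 2$ and work with $Q_{n-2,\,2}$.

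First I would instantiate $Q_{n-2,2}$ explicitly. Its quartets are $q_0 = a_1 b_1 \mid a_{n-2} b_2$ together with $q_{i,1} = a_i a_{i+1} \mid b_1 b_2$ for $1 \le i < n-2$, a total of $n-2$ quartets (by Observation~\ref{obsQSize}) over the $n$ labels $a_1,\dots,a_{n-2},b_1,b_2$. By Lemma~\ref{lemQIncompatible} this set is incompatible, and by Lemma~\ref{lemQSubsetCompatible} every proper subset is compatible. This immediately gives conditions (2), (3), and (4). The remaining work is condition (1): I must exhibit a single label $\ell$ contained in every quartet of $Q$. Here is the main obstacle, and the reason $Q_{n-2,2}$ as literally written does not suffice: the quartets $q_{i,1}$ all contain $b_1$ and $b_2$, but $q_0 = a_1 b_1 \mid a_{n-2} b_2$ also contains $b_1$ (and $b_2$). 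So in fact \emph{both} $b_1$ and $b_2$ appear in every quartet. Thus I can take $\ell = b_1$ (equivalently $b_2$), and condition (1) holds automatically.

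I would verify this common-label claim carefully, since it is the crux. Every quartet $q_{i,1}$ has leaf set $\{a_i, a_{i+1}, b_1, b_2\}$, which contains $b_1$; and $q_0$ has leaf set $\{a_1, b_1, a_{n-2}, b_2\}$, which also contains $b_1$. Hence $b_1 \in L(q)$ for all $q \in Q_{n-2,2}$, establishing the shared label with $\ell = b_1$. I would present this as the whole argument: apply the three cited results to get incompatibility, hereditary compatibility of proper subsets, and the cardinality $n-2$, then observe the structural fact that $b_1$ (and $b_2$) lies in every quartet because $t=2$ collapses the ``$b$-side'' to just the two labels $b_1,b_2$, both of which appear in $q_0$ and in each $q_{i,1}$.

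I expect the only subtlety to be a bookkeeping check that $n \ge 4$ makes $s = n-2 \ge 2$ and $t = 2 \ge 2$, so that $Q_{n-2,2}$ is well-defined and Lemmas~\ref{lemQIncompatible} and~\ref{lemQSubsetCompatible} apply; for $n = 4$ we get $Q_{2,2} = \{a_1 b_1 \mid a_2 b_2,\ a_1 a_2 \mid b_1 b_2\}$, whose two quartets both contain $b_1$, so the base case is consistent. With that confirmed, the theorem follows immediately, and it sets up the desired tight triplet example: feeding $Q_{n-2,2}$ through Theorem~\ref{thm:quartets_to_triplets} with the common label $\ell = b_1$ will yield a set $R$ of $n-2$ triplets over $n-1$ labels that is minimally incompatible, matching the $n-1$ bound of the preceding theorem.
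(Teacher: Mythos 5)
Your proposal is correct and matches the paper's proof in essence: the paper takes $Q_{2,n-2}$ (common labels $a_1,a_2$) where you take the symmetric $Q_{n-2,2}$ (common labels $b_1,b_2$), which are equivalent by Lemma~\ref{lm:st_equality}, and both arguments then invoke Observation~\ref{obsQSize} and Lemmas~\ref{lemQIncompatible} and~\ref{lemQSubsetCompatible} and read off the shared label directly from the construction.
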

\begin{proof}
Consider the set of quartets $Q_{2, n-2}$. From Lemmas~\ref{lemQIncompatible} and~\ref{lemQSubsetCompatible}, $Q_{2, n-2}$ is incompatible but every proper subset of $Q_{2, n-2}$ is compatible. The set $Q_{2, n-2}$ contains exactly $n-2$ quartets. From the construction, there are two labels in $L$ which are present in all the quartets in $Q_{2, n-2}$. Set one of them to be $\ell$.
\qed
\end{proof}

The following is a consequence of Theorem \ref{thmQuartetSharedLB} and Theorem \ref{thm:quartets_to_triplets}.

\begin{corollary}\label{corTriplets}
For every $n \ge 3$, there exists a set $R$ of triplets with $L(R)=n$ such that all of the following hold.
\begin{enumerate}
\item $R$ is incompatible.
\item Every proper subset of $R$ is compatible.
\item $|R| = n-1$.
\end{enumerate}
\end{corollary}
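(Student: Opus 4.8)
The statement to prove is Corollary~\ref{corTriplets}: for every $n \ge 3$, there is an incompatible set $R$ of $n-1$ triplets on $n$ labels, every proper subset of which is compatible.

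\medskip

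The plan is to obtain $R$ directly by translating the quartet construction of Theorem~\ref{thmQuartetSharedLB} through the quartet-to-triplet correspondence of Theorem~\ref{thm:quartets_to_triplets}. First I would invoke Theorem~\ref{thmQuartetSharedLB} with label count $n+1$ (so that the resulting triplet set lives on $n$ labels): this yields a set $Q$ of quartets on a label set of size $n+1$, together with a distinguished label $\ell$ appearing in every quartet of $Q$, such that $Q$ is incompatible, every proper subset of $Q$ is compatible, and $|Q| = (n+1)-2 = n-1$. Concretely this is the set $Q_{2,n-1}$, all of whose quartets share the common label $\ell$ guaranteed by the theorem.

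\medskip

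Next I would apply Theorem~\ref{thm:quartets_to_triplets} to $Q$ and $\ell$. Since every quartet in $Q$ contains $\ell$, each quartet has the form $ab|c\ell$, and the theorem produces a triplet $ab|c$; let $R$ be the resulting set of triplets. The label set of $R$ is $L(Q) \setminus \{\ell\}$, which has size $(n+1)-1 = n$, giving condition~(3) on $|L(R)|$; and since the quartets of $Q$ are distinct and each determines a distinct triplet, we get $|R| = |Q| = n-1$, which is condition~(3) on cardinality. The incompatibility of $R$ (condition~1) follows from Theorem~\ref{thm:quartets_to_triplets}, which states $Q$ is compatible if and only if $R$ is compatible; since $Q$ is incompatible, so is $R$.

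\medskip

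For condition~(2), that every proper subset of $R$ is compatible, I would use the fact that the correspondence of Theorem~\ref{thm:quartets_to_triplets} is bijective and respects subsets: any proper subset $R' \subset R$ equals the triplet set associated with the corresponding proper subset $Q' \subset Q$ (where $Q'$ consists exactly of those quartets $ab|c\ell$ whose triplet $ab|c$ lies in $R'$). Every such $Q'$ is compatible by Theorem~\ref{thmQuartetSharedLB}, and applying Theorem~\ref{thm:quartets_to_triplets} once more — noting that the quartets of $Q'$ still all share $\ell$, so the theorem applies verbatim to $(Q', R')$ — yields that $R'$ is compatible. The one point deserving care, and the likeliest source of friction, is checking that Theorem~\ref{thm:quartets_to_triplets} genuinely transfers compatibility at the level of subsets rather than only for the full sets: this hinges on the observation that restricting to a subset of quartets sharing $\ell$ corresponds precisely to restricting to the matching subset of triplets, which is immediate from the ``if and only if'' form of the correspondence applied to the subcollections. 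No other step requires more than routine bookkeeping.
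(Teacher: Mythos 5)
Your proof is correct and follows exactly the route the paper intends: the paper derives Corollary~\ref{corTriplets} as a direct consequence of Theorem~\ref{thmQuartetSharedLB} (applied with $n+1$ labels, yielding $Q_{2,n-1}$) and Theorem~\ref{thm:quartets_to_triplets}, with the subset-preserving nature of the quartet--triplet correspondence handling condition~(2) just as you describe. Your write-up simply makes explicit the bookkeeping the paper leaves to the reader.
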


The generalization of the Fitch-Meacham examples  given in~\cite{Lam2011a} can also be expressed in terms of triplets. For any $r \geq 2$, let $L = \{a, b_1, b_2, \cdots, b_r\}$. Let 

\begin{equation*}
R_r = ab_r|b_1 \cup \bigcup_{i = 1}^{r-1} ab_i|b_{i+1}
\end{equation*}

Let $Q = \{ab|c\ell: ab|c \in R_r\}$ for some label $\ell \notin L$. The set $C_{Q}$ of $r$-state characters corresponding to the quartet set $Q$ is exactly the set of characters built for $r$ in~\cite{Lam2011a}. In the partition intersection graph of $C_{Q}$, (following the terminology in~\cite{Lam2011a}) labels $\ell$ and $a$ correspond to the end cliques and the rest of the $r$ labels $\{b_1, b_2, \cdots, b_r\}$ correspond to the $r$ tower cliques. From Lemma~\ref{lemQtoC} and Theorem~\ref{thm:quartets_to_triplets}, $R_r$ is compatible if and only of $Q$ is compatible.

\section{Conclusion}

We have shown that for every $r \ge 2$, $f(r) \ge \lfloor\frac{r}{2}\rfloor\cdot\lceil\frac{r}{2}\rceil + 1$, by showing that for every $n \ge 4$, there exists an incompatible set $Q$ of $\lfloor\frac{n-2}{2}\rfloor\cdot\lceil\frac{n-2}{2}\rceil + 1$ quartets over a set of $n$ labels such that every proper subset of $Q$ is compatible. Previous results show that our lower bound on $f(r)$ is tight for $r=2$ and $r=3$ \cite{Buneman1971a,Estabrook1976a,Gusfield1991a,Lam2011a,Meacham1983a,Semple2003a}. We give the following conjecture.

\begin{conj}\label{conj}
For every $r \ge 2$, $f(r) = \lfloor\frac{r}{2}\rfloor\cdot\lceil\frac{r}{2}\rceil + 1$.
\end{conj}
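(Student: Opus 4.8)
The lower bound is already in hand: Corollary~\ref{corFrLB} gives $f(r) \ge \lfloor\frac{r}{2}\rfloor\cdot\lceil\frac{r}{2}\rceil + 1$. Proving Conjecture~\ref{conj} therefore amounts to establishing the matching \emph{upper} bound
$$ f(r) \le \left\lfloor\frac{r}{2}\right\rfloor\cdot\left\lceil\frac{r}{2}\right\rceil + 1, $$
or, equivalently, that every \emph{minimal} incompatible set of $r$-state characters (one that becomes compatible upon deleting any single character) has at most $\lfloor\frac{r}{2}\rfloor\cdot\lceil\frac{r}{2}\rceil + 1$ characters. The plan is thus to bound the cardinality of an arbitrary minimal obstruction purely in terms of $r$.

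The natural tool is the partition intersection graph $G(C)$ together with Buneman's characterization \cite{Buneman1971a,Gusfield1991a}: $C$ is compatible if and only if $G(C)$ admits a triangulation adding no edge between two states of a common character (a \emph{restricted} chordal completion). Under this correspondence, a minimal incompatible $C$ is precisely one for which $G(C)$ has no restricted triangulation while $G(C\setminus\{\chi\})$ has one for every $\chi \in C$. I would first record that each character contributes at most $r$ vertices to $G(C)$ and that in a minimal obstruction every character is ``active,'' so that the combinatorics of the obstruction is governed by how the state-classes of distinct characters overlap. The right starting framework is the Lam--Gusfield--Sridhar analysis \cite{Lam2011a}, which decomposes the partition intersection graph of a minimal obstruction into \emph{end cliques} and \emph{tower cliques}; their proof that $f(3)=3$ is exactly the $r=3$ instance of the bound I want, and the $R_r$ discussion following Corollary~\ref{corTriplets} shows how their extremal examples sit inside the present quartet picture.

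The key steps, in order, would be: (i) show that in a minimal obstruction the restricted triangulation of each $G(C\setminus\{\chi\})$ is essentially forced, so the deleted character is the unique source of a chordless cycle; (ii) translate this into a grid-like incidence structure on the state pairs, mirroring the grid $Q_{s,t}$ that drives the lower bound (recall via Lemma~\ref{lemQtoC} that the extremal sets of Theorem~\ref{lemCharacterLB} are exactly the images of $Q_{\lfloor r/2\rfloor,\lceil r/2\rceil}$); and (iii) count, showing that the two ``sides'' of this grid can exhibit at most $\lceil r/2\rceil$ and $\lfloor r/2\rfloor$ distinct states before two state-classes coincide or a strictly smaller obstruction appears, capping the number of characters at $\lfloor\frac{r}{2}\rfloor\cdot\lceil\frac{r}{2}\rceil$ plus the single ``closing'' character corresponding to $q_0$. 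Conjecture~\ref{conj} would then follow from showing that the grid is, up to relabeling, the \emph{unique} way to saturate this count.

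The main obstacle is step (iii) --- indeed the upper bound as a whole --- and I expect it to be genuinely hard: even the first open case, $f(4) \le 5$, is not currently known, and no general upper bound matching a superlinear lower bound exists in the literature. The difficulty is that a minimal obstruction need not visibly decompose as a grid, and ruling out ``exotic'' minimal obstructions carrying more characters than the grid requires a structural classification of minimal restricted-triangulation obstructions that goes well beyond what the partition intersection graph analysis currently delivers. A sensible intermediate target is to settle $r=4$ first, thereby resolving the smallest open case and testing whether the grid is extremal, before attempting the general count.
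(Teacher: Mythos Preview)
The statement you are addressing is labeled a \emph{conjecture} in the paper, and the paper provides no proof of it. Only the lower bound $f(r) \ge \lfloor r/2\rfloor\cdot\lceil r/2\rceil + 1$ is established (Corollary~\ref{corFrLB}); the matching upper bound is explicitly left open in the Conclusion. There is therefore no ``paper's own proof'' to compare your proposal against.

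Your proposal correctly isolates the upper bound as the sole remaining task, and you are candid that your step~(iii)---the crux of the argument---is a plan rather than a proof; you even note that the smallest open case $f(4)\le 5$ is unresolved. The framework you outline (partition intersection graphs, restricted chordal completions, the Lam--Gusfield--Sridhar end/tower-clique decomposition) is a plausible line of attack and is consonant with how the $r=3$ case was settled, but nothing in the proposal constitutes a proof. Step~(i) asserts without justification that the restricted triangulation of $G(C\setminus\{\chi\})$ is ``essentially forced''; step~(ii) presumes the obstruction can be read as a grid; and step~(iii) posits exactly the structural classification of minimal obstructions that would have to be \emph{proved}. Neither the paper nor the surrounding literature supplies a mechanism for bounding the size of a minimal incompatible family of $r$-state characters from above, and your proposal does not supply one either. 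What you have written is a reasonable research outline, not a proof, and should be presented as such.
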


Note that, due to Theorem \ref{thmLBByQuartets}, a proof of Conjecture \ref{conj} would also show that the number of incompatible quartets given in the statement of Theorem \ref{thmQuartetLB} is also as large as possible. Another direction for future work is to show an upper bound on the function $f(r)$, which would prove Conjecture \ref{conjFR}. For quartets, we have a trivial upper bound of ${n \choose 4}$ on the cardinality of a set of quartets over $n$ labels such that every proper subset is compatible. However, the question for multi-state characters remains open.
\section*{Acknowledgments}
We thank Sylvain Guillemot and Mike Steel for valuable comments.
{
\bibliographystyle{splncs03}
\bibliography{bibliography}

\begin{thebibliography}{10}
\providecommand{\url}[1]{\texttt{#1}}
\providecommand{\urlprefix}{URL }

\bibitem{Agarwala1994a}
Agarwala, R., Fern\'{a}ndez-Baca, D.: A polynomial-time algorithm for the
  perfect phylogeny problem when the number of character states is fixed. SIAM
  J. Comput.  23(6),  1216--1224 (1994)

\bibitem{Aho1981a}
Aho, A.V., Sagiv, Y., Szymanski, T.G., Ullman, J.D.: Inferring a tree from
  lowest common ancestors with an application to the optimization of relational
  expressions. SIAM J. Comput.  10(3),  405--421 (1981)

\bibitem{Bodlaender1992a}
Bodlaender, H., Fellows, M., Warnow, T.: Two strikes against perfect phylogeny.
  In: Automata, Languages and Programming, Lect. Notes Comput. Sc., vol. 623,
  pp. 273--283. Springer (1992)

\bibitem{BryantSteel95}
Bryant, D., Steel, M.: Extension operations on sets of leaf-labelled trees.
  Adv. Appl. Math.  16,  425--453 (1995)

\bibitem{Buneman1971a}
Buneman, P.: {The recovery of trees from measurements of dissimilarity}. In:
  Mathematics in the Archeological and Historical Sciences, pp. 387--395.
  Edinburgh University Press (1971)

\bibitem{Dress1992a}
Dress, A., Steel, M.: Convex tree realizations of partitions. Appl. Math. Lett.
   5(3),  3--6 (1992)

\bibitem{Estabrook1976a}
Estabrook, G.F., Johnson, J., McMorris, F.R.: {A mathematical foundation for
  the analysis of cladistic character compatibility}. Math. Biosci.  29(1-2),
  181--187 (1976)

\bibitem{FernandezBaca2001a}
Fern\'{a}ndez-Baca, D.: {The Perfect Phylogeny Problem}. In: Steiner Trees in
  Industry, pp. 203--234. Kluwer (2001)

\bibitem{Fitch1975a}
Fitch, W.M.: Toward finding the tree of maximum parsimony. In: Proceedings of
  the 8th International Conference on Numerical Taxonomy. pp. 189--230 (1975)

\bibitem{Fitch1977a}
Fitch, W.M.: On the problem of discovering the most parsimonious tree. The
  American Naturalist  111(978),  223--257 (1977)

\bibitem{Grunewald2008a}
Gr\"{u}newald, S., Humphries, P.J., Semple, C.: {Quartet Compatibility and the
  Quartet Graph}. Electr. J. Comb.  15(1),  R103 (2008)

\bibitem{Gusfield1991a}
Gusfield, D.: {Efficient algorithms for inferring evolutionary trees}. Networks
   21(1),  19--28 (1991)

\bibitem{Habib2011a}
Habib, M., To, T.H.: On a conjecture of compatibility of multi-states
  characters. In: Algorithms in Bioinformatics. Lect. Notes Comput. Sc., vol.
  6833, pp. 116--127. Springer (2011)

\bibitem{Kannan1994a}
Kannan, S., Warnow, T.: {Inferring Evolutionary History From DNA Sequences}.
  SIAM J. Comput.  23(4),  713--737 (1994)

\bibitem{Kannan1997a}
Kannan, S., Warnow, T.: A fast algorithm for the computation and enumeration of
  perfect phylogenies. SIAM J. Comput.  26(6),  1749--1763 (1997)

\bibitem{Lam2011a}
Lam, F., Gusfield, D., Sridhar, S.: {Generalizing the Splits Equivalence
  Theorem and Four Gamete Condition: Perfect Phylogeny on Three-State
  Characters}. SIAM J. Discrete Math.  25(3),  1144--1175 (2011)

\bibitem{Meacham1983a}
Meacham, C.A.: {Theoretical and computational considerations of the
  compatibility of qualitative taxonomic characters}. In: Numerical Taxonomy,
  Nato ASI series, vol.~G1. Springer (1983)

\bibitem{Semple2003a}
Semple, C., Steel, M.: Phylogenetics. Oxford Lecture Ser. Math. Appl., Oxford
  University Press (2003)

\bibitem{Shutters2012a}
Shutters, B., Fern\'{a}ndez-Baca, D.: A simple characterization of the minimal
  obstruction sets for three-state perfect phylogenies. Appl. Math. Lett.
  25(9),  1226--1229 (2012)

\bibitem{SteelCommunications}
Steel, M.: Personal communications (2012)

\bibitem{Steel1992a}
Steel, M.: {The complexity of reconstructing trees from qualitative characters
  and subtrees}. J. Classif.  9(1),  91--116 (1992)

\end{thebibliography}
}
\newpage
\section*{Technical Appendix}

\subsection{Proof of Lemma \ref{lemQtoC}}

\begin{proof}
Let $T$ be any tree. It suffices to show that $T$ displays $q=ab|cd$ if and only if $\chi_q$ is convex on $T$.
($\Rightarrow$) Suppose $T$ displays $q$. It follows that the path from the leaf labeled $a$ to the leaf labeled $b$ does not intersect the path from the leaf labeled $c$ to the leaf labeled $d$. Every other state $s$ of $\chi_q$ has only one label $\ell_s$ with state $s$ for $\chi_q$; hence the minimal subtree connecting all leaves with labels having state $s$ for $\chi$ is a single leaf. It follows that $\chi_q$ is convex on $T$. 
($\Leftarrow$) Now suppose that $\chi_q$ is convex on $T$. Then the minimal subtree containing both the leaf labeled $a$ and the leaf labeled $b$ is pairwise disjoint from the minimal subtree containing both the leaf labeled $c$ and the leaf labeled $d$.  It follows that the path from the leaf labeled $a$ to the leaf labeled $b$ does not intersect the path from the leaf labeled $c$ to the leaf labeled $d$. Hence $T$ displays $q$.
\qed
\end{proof}

\subsection{Proof of Lemma \ref{lm:st_equality}}

\begin{proof}
We define the function $g : L_{s,t} \rightarrow L_{t,s}$ by 
$$
	g(\ell) = \begin{cases}
				b_i	& \mbox{ if } \ell \mbox{ is of the form } a_i \mbox{ for some } 1 \le i \le s \\
				a_i & \mbox{ if } \ell \mbox{ is of the form } b_i \mbox{ for some } 1 \le i \le t. \\
			\end{cases}
$$
It is straightforward to verify that $\ell_1 \ell_2 | \ell_3 \ell_4 \in Q_{s,t}$ if and only if $g(\ell_1) g(\ell_2) | g(\ell_3) g(\ell_4) \in Q_{t,s}$. The theorem follows.
\qed
\end{proof}

\end{document}